\newtheorem{thm}{Theorem}[section]
\newtheorem{prop}[thm]{Proposition}
\newtheorem{lem}[thm]{Lemma}
\theoremstyle{definition}
\newtheorem{defn}[thm]{Definition}
\theoremstyle{remark}
\newtheorem{rem}[thm]{Remark}
\theoremstyle{definition}
\theoremstyle{definition}
\theoremstyle{definition}
\numberwithin{equation}{section}
\title{Horizon saddle connections, quasi-Hopf surfaces and Veech groups of dilation surfaces}
\author{Guillaume Tahar}
\address[Guillaume Tahar]{Faculty of Mathematics and Computer Science, Weizmann Institute of Science,
Rehovot, 7610001, Israel}
\email{tahar.guillaume@weizmann.ac.il}
\date{May 12, 2020}
\keywords{Dilation Surface, Veech group, Saddle connection}
\begin{document}
\begin{abstract}
Dilation surfaces are generalizations of translation surfaces where the geometric structure is modelled on the complex plane up to affine maps whose linear part is real. They are the geometric framework to study suspensions of affine interval exchange maps. However, though the $SL(2,\mathbb{R})$-action is ergodic in connected components of strata of translation surfaces, there may be mutually disjoint $SL(2,\mathbb{R})$-invariant open subsets in components of dilation surfaces.  A first distinction is between triangulable and non-triangulable dilation surfaces. For non-triangulable surfaces, the action of $SL(2,\mathbb{R})$ is somewhat trivial so the study can be focused on the space of triangulable dilation surfaces.\newline
In this paper, we introduce the notion of horizon saddle connections in order to refine the distinction between triangulable and non-triangulable dilation surfaces. We also introduce the family of quasi-Hopf surfaces that can be triangulable but display the same trivial behavior as non-triangulable surfaces. We prove that existence of horizon saddle connections drastically restricts the Veech group a dilation surface can have.\newline
\end{abstract}
\maketitle
\setcounter{tocdepth}{1}
\tableofcontents

\section{Introduction}
A dilation structure on a surface is a geometric structure modelled on the complex plane with structural group the set of maps $x \mapsto ax+b$ with $a \in \mathbb{R}_{+}^{*}$ and $b \in \mathbb{C}$. In most cases, a dilation structure is defined on the complement of a finite set of singularities, just like translation structures, see \cite{Zo}.\newline
Dilation surfaces have many common features with translation surfaces: notions of straight lines, angles and direction are well-defined (they are defined on $\mathbb{C}$ and transported by pullback to the dilation surface). In particular, for every angle $\theta \in S^{1}$, we can define a \textit{directional foliation}. whose leaves are straight lines in direction $\theta$ in every chart. These leaves are the \textit{trajectories} of the dynamics defined by the directional foliation. Just like translation surfaces are suspensions of interval exchange maps. Dilation surfaces are suspensions of affine interval exchange maps. These foliations have been studied in \cite{BFG}. Just like moduli spaces of translation surfaces, moduli spaces of dilation surfaces decompose into strata that are analytic orbifolds on which there is an action of $SL(2,\mathbb{R})$ that encompasses a renormalization process.\newline
In strata of translation surfaces, most interesting dynamic properties depend on the closure of the $SL(2,\mathbb{R})$-orbit. Besides, the orbit of a generic translation surface is dense in its connected component of the stratum. We look for an analogous framework for dilation surfaces.\newline
However, there are no $SL(2,\mathbb{R})$-invariant dense open sets in strata of dilation surfaces. An idea exposed in \cite{DFG} is that we should focus on nice subsets of strata. A first criterion is to consider dilation surfaces that are triangulable in a geometric sense. We require that the edges of the triangulation are saddle connections, that is geodesic segments between singularities (and without singularities in their interior). These triangulable dilation surfaces form a $SL(2,\mathbb{R})$-invariant open set of the moduli space.\newline
In this paper, we generalize this criterion by introducing the notion of \textit{horizon saddle connections}. If a dilation surface displays such objects, then the action of $SL(2,\mathbb{R})$ on it simplifies drastically.\newline

\begin{defn}
In a dilation surface, a $k$-horizon saddle connection $\gamma$ is a saddle connection such that no trajectory crosses $\gamma$ strictly more than $k$ times. 
\end{defn}

In particular, non-triangulable dilation surfaces admit $1$-horizon saddle connections, see Proposition 3.1 for details.\newline

Inclusion of the moduli space $\mathcal{T}$ of triangulable dilation surfaces in the moduli space $\mathcal{D}$ of dilation surfaces can be generalized into an infinite sequence of inclusions of $SL(2,\mathbb{R})$-invariant open sets. The example of quasi-Hopf surfaces (see Subsection 3.2 for details) shows that there are triangulable surfaces with some horizon saddle connections.

\begin{thm}
In the moduli space of dilation surfaces $\mathcal{D}$, for any $k \geq 1$ the set $\mathcal{H}_{k}$ of surfaces without $k$-horizon saddle connections is a $SL(2,\mathbb{R})$-invariant open set. We have
$$\dots \subset \mathcal{H}_{k} \subset \dots \subset \mathcal{H}_{1} \subset \mathcal{T} \subset \mathcal{D}$$
\end{thm}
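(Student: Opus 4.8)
The plan is to treat the three assertions separately: the chain of inclusions, the $SL(2,\mathbb{R})$-invariance, and the openness of $\mathcal{H}_k$, with essentially all of the work concentrated in the last. For the inclusions, I would first note that a $k$-horizon saddle connection is automatically a $(k+1)$-horizon one, since a trajectory meeting it at most $k$ times meets it at most $k+1$ times; contrapositively, a surface with no $(k+1)$-horizon saddle connection has no $k$-horizon one, which gives $\mathcal{H}_{k+1}\subset\mathcal{H}_k$. The inclusion $\mathcal{H}_1\subset\mathcal{T}$ is exactly the contrapositive of Proposition 3.1, and $\mathcal{T}\subset\mathcal{D}$ is tautological. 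I would also record, for reuse below, that since a $1$-horizon saddle connection is a fortiori a $k$-horizon one, Proposition 3.1 shows that every non-triangulable surface carries a $k$-horizon saddle connection.

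For the invariance, I would use that for $g\in SL(2,\mathbb{R})$ the surface $g\cdot X$ has the same underlying topological surface as $X$, the action being post-composition of the charts by $g$; since $g$ conjugates each homothety $z\mapsto az$ with $a>0$ to itself, the transition maps of $g\cdot X$ remain of dilation type. Under the identity of the underlying surface, straight segments map to straight segments and the leaves of the direction-$\theta$ foliation of $X$ to the leaves of the direction-$(g\cdot\theta)$ foliation of $g\cdot X$, so that saddle connections and trajectories correspond bijectively. As the number of transverse intersections of a trajectory with a saddle connection is a purely topological quantity, it is unchanged, whence $\gamma$ is $k$-horizon on $X$ if and only if its image is $k$-horizon on $g\cdot X$. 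Therefore $X$ carries a $k$-horizon saddle connection exactly when $g\cdot X$ does, which proves $g\cdot\mathcal{H}_k=\mathcal{H}_k$.

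Openness I would establish by showing that the complement $\mathcal{D}\setminus\mathcal{H}_k$ is closed. Let $X_n\to X$ with each $X_n$ carrying a $k$-horizon saddle connection $\gamma_n$. If $X\notin\mathcal{T}$, then $X$ is non-triangulable and, by the remark above, already carries a $k$-horizon saddle connection, so $X\notin\mathcal{H}_k$. If instead $X\in\mathcal{T}$, then since $\mathcal{T}$ is open we may assume every $X_n$ is triangulable and work inside $\mathcal{T}$. Here I would invoke two standard features of these moduli spaces: saddle connections of uniformly bounded length vary continuously, and transverse intersections are stable under small deformations. Granting a locally uniform bound $|\gamma_n|\le L$ on the lengths of the $\gamma_n$, the first feature produces a subsequential limit $\gamma$, a saddle connection of $X$; were $\gamma$ not $k$-horizon, some trajectory would cross it more than $k$ times transversally, and by stability the same would hold for $\gamma_n$ for $n$ large, contradicting that $\gamma_n$ is $k$-horizon. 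Hence $\gamma$ is $k$-horizon and $X\notin\mathcal{H}_k$, closing the argument.

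The main obstacle is precisely this length bound: I must show that near $X$ there is an $L$ such that every $k$-horizon saddle connection has length at most $L$, equivalently that a sufficiently long saddle connection is always crossed more than $k$ times by some trajectory. The heuristic is that, presenting such a $\gamma$ as horizontal, the first return of a transverse flow to $\gamma$ has return time decreasing with $|\gamma|$, so that a single recurrent trajectory meets $\gamma$ a number of times growing with $|\gamma|$. Making this rigorous is the delicate point, because the directional flow of a dilation surface preserves no area; I would instead exploit a geometric triangulation of the nearby triangulable surface, bounding from below the number of times a long geodesic segment must re-cross $\gamma$ in terms of the number of triangles it traverses, and arrange the estimate to be uniform over a neighborhood of $X$. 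With this bound in hand, the sequential argument above applies and $\mathcal{H}_k$ is open.
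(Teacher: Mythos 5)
Your handling of the chain of inclusions and of the $SL(2,\mathbb{R})$-invariance is correct and agrees with the paper. The openness argument, however, contains a genuine gap, and you have located it yourself: the locally uniform bound on the size of $k$-horizon saddle connections is not an auxiliary technicality but the entire content of the theorem, and your proposal leaves it as an admitted hypothesis (``Granting a locally uniform bound $|\gamma_n|\le L$\dots''). The first heuristic you offer, via first-return times shrinking with $|\gamma|$, cannot be repaired, as you yourself note, since directional flows on dilation surfaces preserve no measure; moreover the quantity $|\gamma_n|$ is not even well-defined, because a dilation surface carries no global notion of length (the paper stresses exactly this in Subsection 2.1), so your key lemma must be recast combinatorially before it can be stated, let alone proved.

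The fix is the triangulation count you only gesture at in your final sentence, and it is precisely how the paper argues (this is the mechanism of its Lemma 3.2, reused in the proof of Theorem 1.2). Since $X\in\mathcal{H}_k\subset\mathcal{T}$, fix a geometric triangulation of $X$ and a neighborhood $U$ of $X$ in moduli space on which this triangulation persists. The key observation is that each edge $e$ of the triangulation is a saddle connection, so its interior is contained in a trajectory; hence if a saddle connection $\gamma'$ of some $X'\in U$ crosses $e$ strictly more than $k$ times, that trajectory crosses $\gamma'$ strictly more than $k$ times, and $\gamma'$ is not $k$-horizon. Consequently every candidate $k$-horizon saddle connection in every $X'\in U$ crosses each edge at most $k$ times and therefore lies in one of finitely many homotopy classes of arcs, uniformly over $U$; this is the substitute for your missing length bound. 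The paper then concludes directly rather than sequentially: for each of these finitely many classes, the geodesic representative in $X$ is a concatenation of saddle connections of $X$, each crossed more than $k$ times by some trajectory (as $X\in\mathcal{H}_k$), and a compact trajectory segment realizing more than $k$ transverse crossings persists on a smaller neighborhood; intersecting the finitely many neighborhoods rules out all candidates at once. Note finally that even with the bound granted, your subsequential limit of the $\gamma_n$ need not be a single saddle connection of $X$: it can degenerate to a broken geodesic passing through singularities, which is exactly why the paper phrases the limit object as a geodesic representative ``formed by several saddle connections''; your closed-complement argument would need the same amendment.
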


Theorem 1.2 is proved in Section 3.\newline

Horizon saddle connections constrain the action of $SL(2,\mathbb{R})$. In the following, the Veech group $V(X)$ of a dilation surface $X$ is the stabilizer of the action of $SL(2,\mathbb{R})$. There is a remarkable similarity with the three types of Veech groups for translation surfaces with poles: continuous, cyclic parabolic or finite, see \cite{Ta,Ta2} for details.

\begin{thm}
In a dilation surface $X$ such that there are at least three distinct directions of horizon saddle connections, then the Veech group of $X$ is finite.
\end{thm}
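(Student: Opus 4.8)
The plan is to isolate all the geometry into a single finiteness statement and then finish with an elementary argument about the action of $SL(2,\mathbb{R})$ on $\mathbb{RP}^1$.

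First I would record how the Veech group interacts with horizon saddle connections. If $g \in V(X)$ has representative affine automorphism $\phi$, then $\phi$ maps the directional foliation of direction $\theta$ to that of direction $g\cdot\theta$ (the projective action on $\mathbb{RP}^1$), maps saddle connections to saddle connections, and preserves transverse intersection numbers; hence $\phi$ sends a $k$-horizon saddle connection of direction $\theta$ to a $k$-horizon saddle connection of direction $g\cdot\theta$, with the \emph{same} constant $k$. In particular, for a fixed horizon saddle connection $\gamma$ of constant $k$ and direction $\theta$, the whole orbit $V(X)\cdot\theta$ consists of directions that carry a $k$-horizon saddle connection, all with the same bound $k$.

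The crux, which I would state as a lemma and which I expect to be the main obstacle, is the following finiteness: for each fixed $k$, only finitely many directions of $\mathbb{RP}^1$ carry a $k$-horizon saddle connection. Since $\mathbb{RP}^1$ is compact, this is equivalent to non-accumulation of such directions. To prove it I would argue by contradiction: given directions $\theta_n\to\theta_\infty$ carrying $k$-horizon saddle connections $\gamma_n$, the $\gamma_n$ become arbitrarily close to parallel, and I would splice together long arcs of two of them, say $\gamma_n$ and $\gamma_m$ with $n,m$ large, using short transverse connecting segments, so as to produce a genuine trajectory whose direction is close to $\theta_\infty$ and which is forced to return across one of these saddle connections strictly more than $k$ times, contradicting the horizon property. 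Controlling the holonomy of the spliced leaf, so that it really realizes the required number of returns rather than escaping, is the delicate point; an alternative route is a direct compactness argument on $X$ producing, in a limiting direction, a trajectory that crosses a nearby horizon saddle connection more than $k$ times.

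Granting the lemma, I would finish as follows. Let $\theta_1,\theta_2,\theta_3$ be three distinct directions carrying horizon saddle connections, of constants $k_1,k_2,k_3$. By the first paragraph, each orbit $V(X)\cdot\theta_i$ consists of directions carrying a $k_i$-horizon saddle connection, so by the lemma each such orbit is finite. Hence the stabilizer $V(X)_{\theta_i}$ has finite index for each $i$, and the subgroup $H=\bigcap_i V(X)_{\theta_i}$ fixing all three directions has finite index in $V(X)$. But an element of $SL(2,\mathbb{R})$ fixing three distinct points of $\mathbb{RP}^1$ acts as the identity Möbius transformation, hence equals $\pm\mathrm{Id}$; thus $H\subseteq\{\pm\mathrm{Id}\}$. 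A group with a finite-index subgroup of order at most $2$ is finite, so $V(X)$ is finite, with $|V(X)|\le 2\,[V(X):H]$.
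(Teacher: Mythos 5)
Your overall architecture is sound, but it is worth separating its two halves. The finiteness statement you isolate as the crux is precisely the paper's Lemma 3.2 (for each fixed $k$, a dilation surface has only finitely many $k$-horizon saddle connections, hence finitely many directions carrying them), so you identified the right pivot; however, your proposed proof of it --- splicing long arcs of nearly parallel horizon saddle connections with short transverse segments --- is not a proof, and you concede the delicate point yourself (forcing the spliced trajectory to actually return across one of them more than $k$ times). The paper's argument is quite different and avoids that difficulty: it fixes a maximal geodesic arc system decomposing $X$ into triangles and affine cylinders of angle at least $\pi$, notes that only finitely many homotopy classes of arcs cross each edge of the system at most $k$ times, and uses the fact that each class has a unique geodesic representative, which minimizes geometric intersection number. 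If you cite Lemma 3.2 instead of relying on the splicing sketch, the gap disappears; as written, the lemma remains unproven in your attempt.

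Granting the lemma, your finish is correct and genuinely different from the paper's. The paper argues that the finite, invariant set of horizon directions is permuted by $V(X)$, so each $\phi$ has a power fixing at least three points of $\mathbb{RP}^1$ and is therefore elliptic; it then invokes the dichotomy theorem of Duryev--Fougeron--Ghazouani (Theorem 4.1) to know $V(X)$ is discrete (the hypothesis rules out Hopf surfaces, whose saddle connections all lie in one direction) before concluding that a discrete group of elliptic elements is a finite rotation group. Your orbit--stabilizer argument --- $H=\bigcap_i V(X)_{\theta_i}$ has finite index and is contained in $\{\pm\mathrm{Id}\}$ since a M\"obius transformation fixing three points of $\mathbb{RP}^1$ is the identity --- bypasses discreteness entirely, which is more elementary and self-contained. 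A further small merit: by fixing the constants $k_1,k_2,k_3$ and using equivariance with the \emph{same} $k$, you only ever need finiteness per fixed $k$; the paper's opening phrase ``the set of directions of horizon saddle connections is finite'' is, strictly speaking, only licensed per fixed $k$, since Remark 3.3 leaves open whether $k$ can be unbounded on a given surface, so your bookkeeping is the more careful version of the same step.
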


\begin{thm}
In a dilation surface $X$ such that there are exactly two distinct directions of horizon saddle connections, then there are two cases:\newline
(i) $V(X)$ is finite (of order $1$, $2$ or $4$);\newline
(ii) the Veech group of $X$ is conjugated to $\left\{ \begin{pmatrix} a^{k}&0 \\ 0 & a^{-k} \end{pmatrix} \mid k \in \mathbb{Z} \right\} $ with $a \in  \mathbb{R}^{*}_{+}$ or its product with the subgroup generated by $\begin{pmatrix} 0&1 \\ -1 & 0 \end{pmatrix}$.\newline
In the latter case, the surface is a rational quasi-Hopf surface.
\end{thm}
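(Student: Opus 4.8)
The plan is to transport the problem to the action of $V(X)$ on $\mathbb{RP}^{1}$, exactly as in the proof of Theorem 1.3, and then to analyze which subgroups of $SL(2,\mathbb{R})$ can stabilize a two-point subset of $\mathbb{RP}^{1}$. Writing $D\subset\mathbb{RP}^{1}$ for the set of directions carrying horizon saddle connections, the first point is the equivariance already implicit in Theorem 1.2: an affine automorphism of $X$ with linear part $g\in SL(2,\mathbb{R})$ sends the directional foliation of direction $\theta$ to that of direction $g\cdot\theta$ and preserves crossing numbers, so it carries a $k$-horizon saddle connection in direction $\theta$ to one in direction $g\cdot\theta$. Hence every $g\in V(X)$ stabilizes $D$ setwise. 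Since $|D|=2$, I would conjugate in $SL(2,\mathbb{R})$ so that $D=\{0,\infty\}$; then $V(X)$ lies in the normalizer $N$ of the diagonal torus $A=\{\mathrm{diag}(a,a^{-1})\}$, each element being either diagonal (fixing $0$ and $\infty$) or anti-diagonal (swapping them). This produces an exact sequence $1\to V_{0}\to V(X)\to\mathbb{Z}/2$ with $V_{0}=V(X)\cap A$, the quotient recording the presence of a direction swap. Unlike the case $|D|\geq 3$, where the pointwise stabilizer in $PSL(2,\mathbb{R})$ is trivial and finiteness is immediate, here $N$ is infinite and the real work is to control $V_{0}$.

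The central step, and the main obstacle, is to show that $V_{0}$ is discrete. This is exactly where the horizon hypothesis must be used: I would argue that the horizon saddle connections of a fixed direction form an isolated (locally finite) family, so that their developed lengths do not accumulate. A diagonal element $\mathrm{diag}(a,a^{-1})\in V_{0}$ permutes the horizontal horizon saddle connections and multiplies each length by $a$; were $V_{0}$ non-discrete, it would contain elements with $a$ arbitrarily close to $1$, and such an automorphism would have to map a fixed shortest horizon saddle connection to a nearby one, hence to itself, while rescaling its length by $a\neq 1$ --- impossible for a compact geodesic arc. Equivalently, the closure of a non-discrete $V_{0}$ would be a one-parameter group of dilation automorphisms fixing each of the finitely many singularities and acting nontrivially by scaling on the horizon saddle connections, again contradicting their isolation. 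Proving the required local finiteness from the bounded-crossing property (via Proposition 3.1) is the delicate ingredient; granting it, $V_{0}$ is a discrete subgroup of $A\cong\mathbb{R}^{*}$, hence one of $\{I\}$, $\{\pm I\}$, $\langle\mathrm{diag}(a,a^{-1})\rangle$, or $\langle\mathrm{diag}(a,a^{-1}),-I\rangle$.

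It remains to combine $V_{0}$ with the possible swap. Any anti-diagonal $s\in V(X)$ satisfies $s^{2}=-I$, so a swap forces $-I\in V_{0}$ and makes $\langle s\rangle$ cyclic of order $4$; after a further conjugation fixing $\{0,\infty\}$ I may take $s=\begin{pmatrix}0&1\\-1&0\end{pmatrix}$. When $V_{0}$ is finite this yields $V(X)\in\{\{I\},\{\pm I\},\langle s\rangle\}$, of orders $1,2,4$, which is case (i); here I would also confirm that $-I\in V(X)$ without a swap is a genuine (order $2$) possibility and that no other finite configuration arises. When $V_{0}$ is infinite cyclic, $V(X)$ is either $\langle\mathrm{diag}(a,a^{-1})\rangle$ or its product with $\langle s\rangle$, which is case (ii); I would verify here that the orientation element $-I$ cannot occur in isolation --- otherwise $\langle\mathrm{diag}(a,a^{-1}),-I\rangle\cong\mathbb{Z}\times\mathbb{Z}/2$ would be a fifth group not listed --- so that these two are the only possibilities.

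Finally, to identify the surface in case (ii) I would use the hyperbolic affine automorphism $\phi$ with linear part $\mathrm{diag}(a,a^{-1})$. It fixes the two horizon directions, expands one horizon family by $a$ and contracts the other by $a^{-1}$, and the trajectories running between the two invariant directions are permuted compatibly with this scaling. This rigidity should force $X$ into the normal form introduced in Subsection 3.2; matching the dilation factor $a$ to the rationality datum of that construction identifies $X$ as a rational quasi-Hopf surface and completes the proof. This last identification relies on the explicit description of quasi-Hopf surfaces and is the one piece that cannot be carried out without the definitions of Subsection 3.2.
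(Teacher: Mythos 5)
Your group-theoretic skeleton (the Veech group stabilizes the two-point set $D$ of horizon directions, hence lies in the normalizer of the diagonal torus, with the classification reduced to controlling $V_{0}=V(X)\cap A$) is consistent with the statement, but your central step --- discreteness of $V_{0}$ --- is where the proposal breaks down, in two ways. First, the sketch is not merely incomplete but incorrect: on a dilation surface there is no well-defined length of a saddle connection (the paper stresses in Subsection 2.1 that only \emph{local length ratios} make sense), so ``a fixed shortest horizon saddle connection'' and ``rescaling its length by $a$'' are meaningless. Worse, the claimed impossibility would disprove case (ii) of the very theorem you are proving: on a rational quasi-Hopf surface the hyperbolic generator $\mathrm{diag}(a,a^{-1})$ with $a\neq 1$ \emph{does} map each horizon saddle connection to itself, the linear expansion being absorbed by the dilation holonomy of the adjacent affine cylinders --- this is exactly the mechanism of Subsection 2.4, where the flow $A_{t}$ at time $t=\frac{\ln(\lambda)}{2}$ preserves an affine cylinder together with its boundary, and it is the basis of Proposition 3.8. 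So ``impossible for a compact geodesic arc'' fails precisely because of nontrivial linear holonomy, and no length-rigidity argument of this shape can work. The paper avoids the issue entirely: since there are two directions of horizon saddle connections, $X$ is not a Hopf surface, so the Dichotomy Theorem (Theorem 4.1, from \cite{DFG}) yields discreteness of $V(X)$ at once; parabolics are excluded because a parabolic fixes exactly one point of $\mathbb{RP}^{1}$ and cannot stabilize $D$ (in your normalization this exclusion is automatic, which is fine), and a discrete group of diagonal plus anti-diagonal matrices then has exactly the shapes listed.

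Second, the concluding identification, which you explicitly defer, is carried out in the paper by results you could have quoted: Lemma 4.3 shows that a saddle connection in a hyperbolic direction lies on the common boundary of two cylinders, Proposition 4.4 upgrades this to $X$ being quasi-Hopf (invariant cylinders propagate until they cover the surface), and Proposition 3.8 supplies the rationality, since admitting the hyperbolic element $\mathrm{diag}(a,a^{-1})$ in $V(X)$ amounts to the log-commensurability of the dilation ratios of the non-integer affine cylinders. As it stands, your proposal establishes only the easy containment of $V(X)$ in the normalizer of a torus; the two substantive claims --- the cyclic structure in case (ii) and the rational quasi-Hopf conclusion --- remain unproven. (A minor point, common to your write-up and the paper's own brief proof: neither explicitly eliminates the purely diagonal group $\langle \mathrm{diag}(a,a^{-1}), -Id\rangle \cong \mathbb{Z}\times\mathbb{Z}/2$, which is absent from the theorem's list; you are right to flag it, but flagging is not verifying.)
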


\begin{thm}
In a dilation surface $X$ such that there is exactly one direction of horizon saddle connections, then there are three cases:\newline
(i) Hopf surfaces: the Veech group of $X$ is conjugated to $\left\{\begin{pmatrix} a & b \\ 0 & a^{-1} \end{pmatrix} \mid a \in \mathbb{R}^{*}, b \in \mathbb{R} \right\}$;\newline
(ii) the Veech group of $X$ is cyclic parabolic: conjugated to $\left\{ \begin{pmatrix} 1&k \\ 0 & 1 \end{pmatrix} \mid k \in \mathbb{Z} \right\}$ or its product with $\left\{ \pm Id \right\}$.\newline
(iii) the Veech group of $X$ is trivial or $\left\{ \pm Id \right\}$.
\end{thm}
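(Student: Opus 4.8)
The plan is to exploit the $SL(2,\mathbb{R})$-equivariance of the horizon condition. Because the number of times a trajectory meets a saddle connection is a topological intersection number, it is unchanged when we act by $g \in SL(2,\mathbb{R})$: the image of a $k$-horizon saddle connection in direction $\theta$ is a $k$-horizon saddle connection in direction $g\cdot\theta$. Hence the set $\mathcal{S}\subseteq\mathbb{RP}^1$ of directions carrying horizon saddle connections is invariant under the Veech group $V(X)$. By hypothesis $\mathcal{S}=\{\theta_0\}$, so every element of $V(X)$ fixes the line $\theta_0$. Conjugating so that $\theta_0$ is horizontal, this gives the first reduction
\[ V(X)\subseteq B:=\left\{\begin{pmatrix} a & b \\ 0 & a^{-1}\end{pmatrix} \mid a\in\mathbb{R}^*,\ b\in\mathbb{R}\right\}, \]
the full stabilizer in $SL(2,\mathbb{R})$ of the unoriented horizontal line. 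What remains is to determine which subgroups of $B$ actually occur.

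I would then organize the analysis through the dilation character $\delta:B\to\mathbb{R}^*$, $\begin{pmatrix} a & b \\ 0 & a^{-1}\end{pmatrix}\mapsto a$, together with the unipotent radical $U=\left\{\begin{pmatrix} 1 & b \\ 0 & 1\end{pmatrix}\right\}$ of parabolics fixing $\theta_0$. The only torsion in $B$ is $\{\pm Id\}$, so every finite subgroup is trivial or $\{\pm Id\}$ (case (iii)), and a routine computation shows that a discrete subgroup $G\subseteq B$ with $\delta(G)\subseteq\{\pm1\}$ is, after accounting for the sign $a=-1$, a cyclic parabolic group or its product with $\{\pm Id\}$ (case (ii)). The entire substance of the theorem is therefore concentrated in one rigidity statement: if $V(X)$ either contains a hyperbolic element $\operatorname{diag}(a,a^{-1})$ with $|a|\neq 1$, or is not discrete, then $X$ is a Hopf surface and $V(X)=B$ (case (i)).

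The main obstacle is exactly this rigidity, which I would approach through the developing map. Given an affine automorphism $\phi$ of $X$ with derivative $h=\operatorname{diag}(a,a^{-1})$, $a>1$, lifting to the universal cover produces a self-similarity $D\circ\tilde\phi=A\circ D$ for an affine map $A$ whose linear part is proportional to $h$; thus $D$ conjugates $\phi$ to the linear hyperbolic action of $h$ on $\mathbb{C}$. Since $\phi$ expands the horizontal foliation and contracts the vertical one while $X$ is compact, the recurrence of this partially hyperbolic dynamics should pin down the holonomy representation, up to conjugacy, as that of a Hopf surface of the kind constructed in Subsection 3.2, whose Veech group is computed there to be all of $B$; the same developing-map argument applied to a hypothetical one-parameter subgroup of $V(X)$ exhibits $X$ as homogeneous under a continuous affine flow and again identifies it as Hopf. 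I expect the two delicate points to be (a) showing that a hyperbolic element cannot sit in $V(X)$ in isolation once $\theta_0$ is the unique horizon direction — equivalently, that a purely hyperbolic cyclic Veech group is incompatible with a single horizon direction and must be accompanied by the parabolics of $B$ — and (b) excluding any continuous parabolic symmetry outside the Hopf family.

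Once this rigidity confines every non-Hopf surface to the regime $\delta(V(X))\subseteq\{\pm1\}$ with $V(X)$ discrete, the proof finishes by the elementary structure of discrete subgroups of $\{g\in B:\delta(g)=\pm1\}\cong\{\pm Id\}\times\mathbb{R}$. Rescaling the horizontal coordinate so that a generating shear becomes $\begin{pmatrix} 1 & 1 \\ 0 & 1\end{pmatrix}$, such a subgroup is trivial, equal to $\{\pm Id\}$, the cyclic parabolic group $\left\{\begin{pmatrix} 1 & k \\ 0 & 1\end{pmatrix}\mid k\in\mathbb{Z}\right\}$, or its product with $\{\pm Id\}$. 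This recovers cases (ii) and (iii) and completes the trichotomy.
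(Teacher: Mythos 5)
Your first reduction---the unique horizon direction is $V(X)$-invariant, hence after conjugation $V(X)$ lies in the Borel subgroup $B$ stabilizing the horizontal line---and your endgame classification of the discrete subgroups of $\left\{ g \in B \mid \delta(g) = \pm 1 \right\}$ coincide with the paper's easy steps and are fine. The genuine gap is exactly where you yourself locate ``the entire substance of the theorem'': the rigidity claim that a hyperbolic element of $V(X)$, or non-discreteness of $V(X)$, forces $X$ to be a Hopf surface. You do not prove this. Saying that the recurrence of the partially hyperbolic dynamics ``should pin down the holonomy representation,'' and flagging your points (a) and (b) as delicate, is an accurate diagnosis of the difficulty but not an argument. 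Moreover, your developing-map sketch never uses the hypothesis that the invariant direction actually \emph{contains} saddle connections, and that hypothesis is essential: on translation surfaces, hyperbolic (pseudo-Anosov) elements exist precisely with no saddle connection or closed geodesic in their invariant directions, so no general self-similarity argument can force a cylinder structure without it.

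The paper closes both halves with tools it has already built, and this is what your proposal is missing. For non-discreteness it simply invokes the dichotomy theorem of Duryev--Fougeron--Ghazouani (Theorem 4.1): a dilation surface of genus $g \geq 2$ with non-discrete Veech group is Hopf, with $V(X)$ conjugated to the full upper triangular group; you could, and should, cite this known result rather than attempt to re-derive it from the developing map. For a hyperbolic element $\phi$, the unique horizon direction must be one of the two $\phi$-invariant directions and by definition it contains saddle connections; Lemma 4.3 then shows---via the local length ratio and iteration of $\phi$ on triangles adjacent to such a saddle connection---that any saddle connection in a hyperbolic-invariant direction belongs to a chain of saddle connections bounding cylinders, and Proposition 4.4 propagates the $\phi$-invariant cylinders across their common boundaries until they cover $X$, so $X$ is quasi-Hopf. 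Finally, a quasi-Hopf surface that is not Hopf exhibits horizon saddle connections in two distinct directions, so the single-direction hypothesis forces $X$ to be Hopf, placing all hyperbolic elements inside case (i). Without these cylinder lemmas, or a fully worked-out substitute for them, your trichotomy is not established.
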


Theorems  1.3 to 1.5 are proved in Section 4.\newline

The structure of the paper is the following: \newline
- In Section 2, we recall the background about dilation surfaces: moduli space, linear holonomy, triangulations.\newline
- In Section 3, we introduce the notion of horizon saddle connections and prove the finiteness lemma. We also discuss some examples: dilation tori, surfaces with chambers, quasi-Hopf surfaces, dilation surfaces with finitely many saddle connections.\newline
- In Section 4, we present some preliminary results about Veech groups and prove the main theorems about restriction of the Veech group in presence of horizon saddle connections.

\section{Dilation surfaces}

\subsection{Dilation structure}

A dilation structure on a Riemann surface $X$ is a kind of affine structure. We follow the definitions of \cite{DFG}.

\begin{defn}
A dilation surface is a Riemann surface $X$ with a finite set $\Lambda \subset X$ of singularities and an atlas of charts on $X \setminus \Lambda$ with values in $\mathbb{C}$ and such that:\newline
(i) transition maps are of the form $x \mapsto ax+b$ with $a \in \mathbb{R}_{+}^{*}$.\newline
(ii) the affine structure extends around every element of $\Lambda$ to a "conical singularity" characterized by its topological index and its dilation ratio (see Subsection 2.2).
\end{defn}

One difficulty in the study of dilation surfaces is that there is no notion of distance since two segment with different lengths and the same direction are equivalent up to affine maps. However, the ratio of lengths of two saddle connections sometimes makes sense.\newline
Let $\alpha$ and $\beta$ be two saddle connections of a dilation surface $X$ that intersect (possibly at the ends of the segment) each other. Then, we consider a chart covering a disk $D$ (possibly with a slit) centered on the intersection point. In $D$, the length of the two intersecting branches is well-defined. The portion these branches represent in their saddle connection is also well-defined. Therefore, the \textit{local length ratio} of $\alpha$ and $\beta$ is well-defined. It only depends on the choice of the intersection point and possibly the order in the couple $(\alpha,\beta)$ if the intersection point is a singularity with a nontrivial dilation ratio. This construction will help in some crucial results about Veech groups.

\subsection{Linear holonomy}

In a dilation surface $X$, for every closed path $\gamma$ of $X \setminus \Lambda$, we can cover $\gamma$ with charts of the atlas. The transition map between the first chart and the last chart is an affine map. Its linear part is well-defined up to conjugacy. This number is obviously a topological invariant. Therefore, we have a representation of the pointed fundamental group of $X \setminus \Lambda$ into $\mathbb{R}_{+}^{*}$. Since the latter is Abelian, the representation factorizes through $H_{1}(X \setminus \Lambda, \mathbb{Z})$ (Hurewicz theorem). Thus, we get a group morphism $\rho: H_{1}(X \setminus \Lambda, \mathbb{Z}) \longrightarrow \mathbb{R}_{+}^{*}$ we denote by \textit{linear holonomy} of loops.

The local geometry of a conical singularity is determined by two topological numbers associated to a simple loop $\gamma$ around it:\newline
(i) the linear holonomy $\rho(\gamma)$;\newline
(ii) the topological index $i(\gamma)$.\newline

The neighborhood of such a singularity is constructed starting from an infinite cone of angle $i(\gamma)2\pi$ and a ray starting from the origin of the cone. Then, we identify the right part of the ray with the left part of the ray with a homothety ratio of $\rho(\gamma)$.\newline

The conical singularities satisfy a Gauss-Bonnet formula. It is remarkable that the dilation ratio appears as something like an "imaginary curvature". The following formula has been proved as Proposition 1 in \cite{G}.

\begin{prop}
In a surface of dilation of genus $g \geq 1$ with conical singularities $s_{1},\dots,s_{n}$ of angle $2k_{i}\pi$ and dilation ratio $\lambda_{i}$, we have:\newline
(i) $\sum_{i=1}^{n} (k_{i}-1) = 2g-2$;\newline
(ii) $\sum_{i=1}^{n} ln(\lambda_{i}) = 0$.
\end{prop}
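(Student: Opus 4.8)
The plan is to treat the two identities by independent arguments: part (i) is a purely metric–topological Gauss–Bonnet statement that ignores the dilation ratios, whereas part (ii) is a holonomy statement that follows from the multiplicativity of $\rho$ together with a homological relation among the loops encircling the singularities.

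For part (i), I would exploit the fact that, although a dilation structure does not determine a global flat metric (the metric being defined only up to a positive scalar in each chart), the transition maps $x \mapsto ax + b$ with $a \in \mathbb{R}_+^*$ preserve angles. Hence the notion of angle, and in particular the cone angle $2k_i\pi$ at each singularity, is globally well-defined. I would fix a geodesic triangulation of $X$ whose vertex set is exactly $\Lambda = \{s_1,\dots,s_n\}$, with $V = n$ vertices, $E$ edges and $F$ triangular faces, and count angles two ways. On one hand each Euclidean triangle has angle sum $\pi$, so the total angle equals $\pi F$; on the other hand, summing the angles around each vertex yields $\sum_{i=1}^n 2k_i\pi$. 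Equating gives $F = 2\sum_i k_i$, while the incidence relation $3F = 2E$ gives $E = 3\sum_i k_i$. Substituting into the Euler identity $V - E + F = 2 - 2g$ collapses to $\sum_{i=1}^n (k_i - 1) = 2g - 2$. Equivalently, one may apply the Poincaré–Hopf theorem to the globally defined directional line field, whose index at a cone point of angle $2k_i\pi$ equals $1 - k_i$; summing indices to $\chi(X) = 2 - 2g$ gives the same conclusion while avoiding any triangulability assumption.

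For part (ii), I would use that the linear holonomy $\rho \colon H_1(X \setminus \Lambda, \mathbb{Z}) \to \mathbb{R}_+^*$ is a group morphism, as established in Subsection 2.2, and that by definition $\lambda_i = \rho(\gamma_i)$ for a small positively oriented loop $\gamma_i$ around $s_i$. The key homological input is that $\sum_{i=1}^n [\gamma_i] = 0$ in $H_1(X \setminus \Lambda, \mathbb{Z})$: removing small open disks around the singularities yields a compact oriented surface with boundary $\coprod_i \gamma_i$ onto which $X \setminus \Lambda$ deformation retracts, and the sum of the boundary circles of a compact oriented surface is null-homologous. Applying the multiplicative morphism $\rho$ to this relation gives $\prod_{i=1}^n \lambda_i = \rho\big(\sum_i [\gamma_i]\big) = \rho(0) = 1$, and taking logarithms yields $\sum_{i=1}^n \ln(\lambda_i) = 0$.

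The routine parts are the angle-counting bookkeeping and the homological relation; the only points needing care are the scale-invariance of angles, which legitimizes part (i) despite the absence of a global metric, and the correct orientation convention for the loops $\gamma_i$, so that the homology relation reads $\sum_i [\gamma_i] = 0$ rather than an alternating sum. The main potential obstacle is ensuring in part (i) that the triangulation can be realized with geodesic edges and vertices precisely at $\Lambda$; this is why I would keep the Poincaré–Hopf variant in reserve, since it needs no such geometric triangulation and applies to every dilation surface of genus $g \geq 1$.
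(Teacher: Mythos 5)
Your argument is correct, but note first that the paper does not prove this proposition at all: it imports it wholesale as Proposition 1 of \cite{G}, so there is no internal proof to compare against, and yours must stand on its own. Part (ii) does: the chain is exactly right — $X \setminus \Lambda$ deformation retracts onto the compact surface $\Sigma$ obtained by excising small disks around the $s_i$; in the long exact sequence of the pair $(\Sigma,\partial\Sigma)$ the boundary map sends the relative fundamental class to $\sum_i [\gamma_i]$, which is therefore null-homologous in $\Sigma$, hence in $X\setminus\Lambda$; applying the morphism $\rho$ gives $\prod_i \lambda_i = 1$. The orientation caveat you flag is the right one, and taking all $\gamma_i$ with the boundary orientation of $\Sigma$ settles it.

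For part (i), however, your primary argument (angle count in a geodesic triangulation with vertex set $\Lambda$) is genuinely incomplete rather than merely in need of polish: by Veech's criterion, quoted as Theorem 2.4 in this very paper, a dilation surface admits a geodesic triangulation if and only if it contains no affine cylinder of angle at least $\pi$, and non-triangulable surfaces exist (e.g.\ the Hopf torus of Figure 1). So the Poincar\'e--Hopf variant you keep ``in reserve'' must be promoted to the actual proof. Fortunately it works without reservation: since the linear parts of the transition maps lie in $\mathbb{R}_+^*$ and the homothety gluing at each singularity fixes directions, the horizontal direction field is globally well defined on $X \setminus \Lambda$, its singularity at a cone point of angle $2k_i\pi$ has index $1-k_i$ exactly as in the translation case (the dilation ratio does not affect the local direction field), and summing indices yields $\sum_i (1-k_i) = \chi(X) = 2-2g$. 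Alternatively, one could patch the counting argument by triangulating the triangulable locus and treating the affine cylinders (which carry no singularities and have zero Euler characteristic) separately, but the index argument is cleaner and fully general. With that substitution made, your proof is complete.
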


In particular, there is no dilation surface in genus zero. It would be necessary to introduce a notion of pole among the singularities of the affine structure.

\subsection{Moduli space and strata}

For any $g,n \geq 1$, we consider the set $\mathcal{X}$ of dilation structures on a given compact topological surface of genus $g$ with $n$ marked points. We define the moduli space $\mathcal{D}_{g,n}$ of dilation surfaces of genus $g$ with $n$ singularities as the quotient of $\mathcal{X}$ by the group of orientation-preserving diffeomorphisms. This space is an analytic orbifold of real dimension $6(g-1)+3n$, see \cite{V} for details.\newline

For any sequence of integers $a=(a_{1},\dots,a_{n})$ of integers such that $\sum_{i=1}^{n} a_{i} = 2g-2$ and any sequence of positive real numbers $\lambda=(\lambda_{1},\dots,\lambda_{n})$ such that $\prod_{i=1}^{n} \lambda_{i} = 1$, there is a (possibly empty) stratum $\mathcal{D}_{g,n}(a,\lambda)$ of $\mathcal{D}_{g,n}$. These strata are analytic orbifolds of real dimension $6(g-1)+2n+1$.\newline

$SL(2,\mathbb{R})$ acts on the moduli space of dilation surface by composition with the coordinates maps with values in $\mathbb{C}$. The action preserves the linear holonomy $\rho$ so it also preserves strata. The Veech group $V(X)$ of a dilation surface $X$ is the stabilizer of this group action. It is a subgroup of $SL(2,\mathbb{R})$.

\subsection{Cylinders}

For any closed geodesic, the first return of a small segment orthogonal to the geodesic is a map of the form $x \mapsto \lambda x$ with $\lambda \in \mathbb{R}_{+}^{*}$. We say that the closed geodesic is flat if $\lambda = 1$. Otherwise, it is hyperbolic.\newline

Flat closed geodesics belong to families that describe flat cylinders (just like in translation surfaces). Hyperbolic closed geodesics also describe cylinders. We refer to them as \textit{affine cylinders}. They are obtained from an angular portion of an annulus (or a cyclic cover of an annulus) by gluing the two sides on each other, see Figure 1. The isomorphism class of an affine cylinder is completely determined by two numbers:\newline
(i) The affine factor $\lambda$ (dilation ratio along hyperbolic closed geodesics).\newline
(ii) The angle $\theta$ (determined by the angular portion of the annulus considered).\newline

\begin{figure}
\includegraphics[scale=0.3]{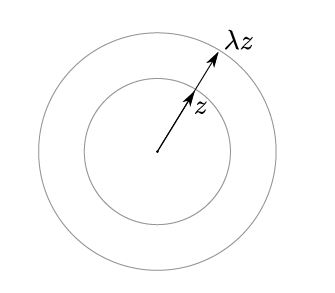}
\caption{Hyperbolic cylinder of angle $2\pi$ and dilation ratio $\lambda$. This surface formed by a unique hyperbolic cylinder is a \textit{Hopf torus}.}
\end{figure}

In translation surfaces, the horocyclic flow and its conjugates (action of unipotent elements of $SL(2,\mathbb{R})$ preserving the direction of the closed geodesics of the cylinder) modify the twist of a flat cylinder. There is a similar phenomenon for affine cylinders.\newline

We consider an affine cylinder of dilation factor $\lambda$ whose boundary is formed by a saddle connection with a marked point in direction $\alpha$ and another saddle connection with a marked point in direction $\beta$. Then we consider the conjugate of the Teichmüller flow $A_{t}$ that contracts direction $\alpha$ with a factor $e^{-t}$ and expands direction $\beta$ with a factor $e^{t}$. The flow of this subgroup on this surface is periodic and the affine cylinder is invariant by any element $A_{t}$ of the flow such that $t=\frac{ln(\lambda)}{2}$. This will be useful in Subection 3.3 when we will study quasi-Hopf surfaces.

\subsection{Triangulable dilation surfaces}

In the framework of dilation surfaces, a (geometric) triangulation is a topological triangulation where the edges are saddle connections and where every conical singularity is a vertex. Having a triangulation of dilation surface provides a nice parametrization of its neighborhood in the moduli space (by deforming the triangles). However, not every dilation surface admits a triangulation. Indeed, affine cylinders of angle at least $\pi$ are not triangulable.\newline

A geodesic trajectory entering in an affine cylinder cannot intersect a hyperbolic geodesic that shares the same direction. Let $I \subset  S^{1}$ be the closure of the interval of the directions of closed geodesics of the affine cylinder. If a trajectory $\gamma$ whose direction do not belong to $I$ enters the cylinder, then it will cross every closed geodesic and finally leave by the opposite boundary. On the contrary, if its direction $\theta$ belongs to $I$, then $\gamma$ will accumulate on the hyperbolic geodesic of direction $\theta$ the closest to the entry point (in the foliation of the cylinder by closed geodesics).\newline
In particular, if the angle of the affine cylinder is at least $\pi$, then there is no saddle connection joining the two boundaries of the cylinder. If there were such a segment, it would belong to a direction that is a direction of a hyperbolic geodesic of the cylinder and it would be forced to cross it. Thus, there is no geometric triangulation of the affine cylinder.\newline

Veech proved in some unpublished course notes the converse result (see \cite{V2}).

\begin{thm}[Veech]
A dilation surface $X$ admits a geometric triangulation if and only if there is no affine cylinders of angle at least $\pi$ in $X$.
\end{thm}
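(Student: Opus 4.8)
The forward implication is already contained in the discussion preceding the statement: an affine cylinder of angle at least $\pi$ admits no saddle connection joining its two boundary components, so such a cylinder obstructs any geometric triangulation. The plan is therefore to prove the converse, namely that the absence of affine cylinders of angle at least $\pi$ forces triangulability. First I would fix, using Zorn's lemma, a maximal family $\Gamma$ of saddle connections that are pairwise non-crossing (disjoint except possibly at common endpoints in $\Lambda$). Cutting $X$ along $\Gamma$ produces complementary regions, each a surface whose interior contains no singularity and admits no saddle connection that could be adjoined to $\Gamma$ while keeping the non-crossing property.

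The next step is to understand these complementary regions. Since the dilation structure is flat away from $\Lambda$, I would rule out degenerate pieces: developing a region into $\mathbb{C}$, a geodesic monogon would have to enclose a singularity, and a geodesic bigon bounded by two distinct segments of $\Gamma$ sharing both endpoints would force its two sides to coincide by uniqueness of geodesics in the disk. Hence each region is either a polygon with at least three sides, or has nontrivial topology. A region with nontrivial topology carries an essential simple closed geodesic; if this geodesic is flat it bounds a flat cylinder that can be crossed by a new saddle connection, contradicting maximality, so it is hyperbolic and is the core of an affine cylinder. For a polygonal region $R$ that is not a triangle I would sweep the geodesic ray emanating from a fixed vertex $v$ across its admissible cone of directions: whenever the ray reaches another vertex we obtain a diagonal disjoint from $\Gamma$, contradicting maximality, so in the non-triangle case some direction must instead yield a ray that never reaches the boundary and, by the trajectory analysis recalled before the theorem, accumulates onto a hyperbolic closed geodesic inside $R$. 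In either case we produce an affine cylinder $C \subset X$.

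It then remains to derive the angle bound, which is the heart of the argument. Suppose the angle of $C$ were strictly less than $\pi$. Then, by the discussion in Subsection 2.4, the arc $I \subset S^1$ of directions of closed geodesics of $C$ is a proper arc, so crossing directions outside $I$ exist; every trajectory in such a direction entering $C$ crosses all closed geodesics and exits through the opposite boundary. Sweeping these crossing directions between the singularities lying on the two boundary components of $C$ (which are carried by segments of $\Gamma$) should produce a saddle connection crossing $C$ and disjoint from $\Gamma$, again contradicting maximality. Consequently $C$ must have angle at least $\pi$, contradicting the hypothesis; thus every complementary region is a triangle. A Gauss--Bonnet / Euler-characteristic count (Proposition 2.2) then shows $\Gamma$ is finite and exhibits a genuine geometric triangulation.

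The step I expect to require the most care is the quantitative passage from a dynamical crossing direction to an actual crossing saddle connection: one must upgrade the existence of trajectories traversing a cylinder of angle less than $\pi$ into a geodesic segment whose two endpoints are the singularities on the opposite boundaries, so that adjoining it genuinely violates maximality of $\Gamma$. Controlling this sweep, together with the dichotomy between rays exiting transversally and rays spiraling onto a hyperbolic core, is where the real work lies; the rest is bookkeeping with the flat structure and the Euler characteristic.
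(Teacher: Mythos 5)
Your overall strategy---cut along a maximal system of pairwise non-crossing saddle connections and show that every complementary piece with no interior saddle connection is a triangle or an affine cylinder of angle at least $\pi$---is not the proof the paper sketches (Veech's argument in \cite{V2}: without affine cylinders of angle at least $\pi$, every affine immersion of an open disk extends to an immersion of its closure, which yields a Delaunay triangulation whose dual is geometric), but it is exactly the alternative route the paper attributes to \cite{Ta3}. The forward direction you treat as the paper does. The problem is that in the converse, the central lemma is never proved, and in fact the same missing ingredient surfaces three times. First, for pieces of nontrivial topology you assert the existence of an essential simple closed geodesic; in a dilation structure there is no metric and no curve-shortening flow, and a free homotopy class need not contain an interior geodesic representative---it can degenerate onto a chain of saddle connections in the boundary of the piece, so this step needs a genuine argument. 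Second, for a polygonal (hence simply connected) piece your dichotomy ``the swept ray reaches a vertex, or spirals onto a hyperbolic closed geodesic inside $R$'' cannot hold as stated: a closed geodesic contained in a simply connected piece would be null-homotopic, hence have trivial linear holonomy, and a flat closed geodesic bounding an immersed singularity-free disk contradicts Gauss--Bonnet (the boundary of such a disk has total turning $2\pi$, while a geodesic has none). So in the polygon case what must actually be excluded is a ray that stays trapped \emph{without} accumulating on any closed geodesic, and ruling that out is precisely Veech's extension lemma---it is not a branch of your dichotomy. Third, your concluding sweep---upgrading the crossing trajectories of a cylinder of angle less than $\pi$ into an actual saddle connection crossing it and disjoint from $\Gamma$---requires controlling where a trajectory goes after it exits the cylinder and pivoting it onto singularities at both ends; this, once more, is the content of the extension lemma.

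You flag this last step yourself as ``where the real work lies,'' and that self-assessment is accurate: what you have written is a correct reduction of the theorem to the statement that a dilation surface with boundary which is neither a triangle nor an affine cylinder of angle at least $\pi$ contains an interior saddle connection (the theorem of \cite{Ta3}), together with plausibility arguments for that statement that do not withstand scrutiny in the cases above. Two smaller points: Zorn's lemma is unnecessary, since non-crossing saddle connections form an arc system whose cardinality is bounded by the topology of $(X,\Lambda)$, so maximal systems are finite; and the exclusion of bigons should be argued via the developing map and the turning-number form of Gauss--Bonnet rather than ``uniqueness of geodesics in the disk,'' since a complementary piece is an immersed affine disk, not a metric one.
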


The idea of the proof is that without affine cylinders of angle at least $\pi$, every affine immersion of an open disk extends to an immersion of its closure. This geometric lemma allows to build a Delaunay triangulation whose dual is a geometric triangulation. The proof clearly generalizes to the case of dilation surfaces with geodesic boundary. Besides, a different proof is given in \cite{Ta3} by proving that any dilation surface with boundary that is not a triangle or an affine cylinder of angle at least $\pi$ has an interior saddle connection.\newline

Even if the surface is not triangulable, a maximal system of non-intersecting saddle connections cuts out the surface into a union of affine cylinders of angle at least $\pi$ (they are clearly disjoint from each other) and a triangulable locus (without affine cylinders of angle at least $\pi$, the construction of a triangulation in the surface with boundary works). It will be used as a substitute of triangulation.

\section{Horizon saddle connections}

A horizon saddle connection is a segment such that for some number $k$, there is no geodesic trajectory crossing it strictly more than $k$ times. In particular, if $k<k'$, any $k$-horizon saddle connection is a $k'$-saddle connection. There is no such horizon saddle connections in translation surfaces. Indeed, the foliation in the generic direction is minimal so every saddle connection is crossed infinitely many times by some trajectory. Therefore, absence of horizon saddle connections indicates that we are not too far from the case of translation surfaces.\newline

The following proposition is the starting point of the generalization of our triangulability condition.

\begin{prop}
The saddle connections that belong to the boundary of an affine cylinder of angle at least $\pi$ are $1$-horizon saddle connections.
\end{prop}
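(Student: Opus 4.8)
The plan is to exploit the key geometric fact, already established in the discussion preceding Theorem 2.5, that a geodesic trajectory entering an affine cylinder of angle at least $\pi$ behaves in a controlled way. Let $\gamma$ be a saddle connection lying on the boundary of such a cylinder $C$, and let $I \subset S^1$ be the closed interval of directions of the closed (hyperbolic) geodesics of $C$. The crucial observation is that since the angle of $C$ is at least $\pi$, the interval $I$ has length at least $\pi$, so $I$ covers \emph{every} unoriented direction. First I would fix an arbitrary oriented trajectory $\delta$ and ask how many times it can cross $\gamma$. Because $\gamma$ sits on the boundary of $C$, each crossing of $\gamma$ by $\delta$ corresponds to $\delta$ either entering or leaving the cylinder $C$ through that boundary segment.

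The heart of the argument is then a case analysis on the direction $\theta$ of $\delta$. If $\theta \notin I$ (as an unoriented direction), then by the dichotomy recalled before Theorem 2.5, once $\delta$ enters $C$ it must cross every closed geodesic of $C$ and leave through the \emph{opposite} boundary; hence it cannot return to cross $\gamma$ a second time from inside, so $\delta$ crosses $\gamma$ at most once. If instead $\theta \in I$, then once $\delta$ enters $C$ it accumulates on the hyperbolic closed geodesic of direction $\theta$ closest to its entry point and never reaches the opposite side; in particular it never crosses $\gamma$ again after entering, so again $\gamma$ is crossed at most once. Since $I$ has length at least $\pi$ and thus meets every unoriented direction, these two cases exhaust all possibilities, and in every case $\delta$ crosses $\gamma$ at most once. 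This shows no trajectory crosses $\gamma$ strictly more than once, which is exactly the definition of a $1$-horizon saddle connection.

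The main obstacle I anticipate is making rigorous the claim that \emph{a single} trajectory cannot cross $\gamma$ twice by entering and re-entering the cylinder from the same side; one must rule out a trajectory that enters $C$ through $\gamma$, does something, and comes back to cross $\gamma$ again. The resolution is that in both cases above the trajectory, after entering through $\gamma$, is forced either to exit through the opposite boundary (when $\theta \notin I$) or to accumulate on an interior closed geodesic and never return to $\gamma$ (when $\theta \in I$); in neither situation does it recross $\gamma$. A subtlety worth addressing is the role of the endpoints of $\gamma$, which are singularities and where the trajectory may begin or terminate rather than cross transversally. I would note that a trajectory starting or ending at a singularity contributes at most the one incidence already counted, so it does not increase the crossing number beyond one. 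Once these points are handled, the bound $k=1$ follows uniformly over all directions, completing the proof.
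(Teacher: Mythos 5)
Your proof is correct and follows essentially the same route as the paper's (which is a two-line argument): the angle bound of at least $\pi$ forces every unoriented direction to be parallel to some closed geodesic of the cylinder, so any trajectory entering through the boundary is trapped, accumulates on a hyperbolic geodesic, and never recrosses a boundary saddle connection. One caveat worth noting: your case $\theta \notin I$ is vacuous precisely because the angle is at least $\pi$ (as you yourself observe when you note $I$ covers every unoriented direction), and the reasoning you give inside that case --- that a trajectory which crosses the cylinder and leaves through the opposite boundary can cross $\gamma$ at most once --- is false in general (in a translation surface a trajectory recrosses the boundary of a flat cylinder arbitrarily often), so your argument survives only because that case never actually occurs; it would be cleaner to dismiss it as empty rather than argue it.
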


\begin{proof}
A trajectory that enters in such an affine cylinder cannot leave it (they accumulate on an hyperbolic geodesic of the cylinder, see Subsection 2.1 for details). Therefore, no saddle connection of the boundary can be crossed two times.
\end{proof}

The key property of horizon saddle connections is that for any $k$, there are finitely many $k$-horizon saddle connections. This property drastically rigidifies the action of $SL(2,\mathbb{R})$. In a similar way, in translation surfaces with poles, most the geometry of a surface is encompassed in a polygon called the core of the surface. The fact that there are finitely many saddle connections in the boundary of the core makes more rigid the action of $SL(2,\mathbb{R})$ on strata of such surfaces, see \cite{Ta2} for details.

\begin{lem}
In a dilation surface, for any $k \geq 1$, there is at most a finite number of $k$-horizon saddle connections.
\end{lem}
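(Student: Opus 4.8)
The plan is to fix, once and for all, a finite cellular decomposition of $X$ and to show that the $k$-horizon condition forces a uniform bound on the combinatorial size of a saddle connection with respect to this decomposition; finiteness will then follow because only finitely many saddle connections can be combinatorially small. Concretely, I would start from the decomposition described at the end of Subsection 2.5: a maximal system of pairwise non-intersecting saddle connections cuts $X$ into finitely many affine cylinders of angle at least $\pi$ and a triangulated triangulable locus. Let $E$ be the resulting finite set of boundary saddle connections (the edges of the triangulation together with the boundary curves of the cylinders), and for a saddle connection $\gamma$ define its \emph{combinatorial length} $\ell(\gamma)$ to be the number of transversal intersection points of the interior of $\gamma$ with the edges in $E$. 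Since the interior of a saddle connection contains no singularity, every such intersection is a transversal crossing at a non-singular interior point of some $e\in E$.

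First I would record that for every $N$ there are only finitely many saddle connections with $\ell(\gamma)\le N$. Indeed, such a $\gamma$ meets the open cells of the decomposition in at most $N+1$ straight segments, each entering a cell through one edge of $E$ and leaving through another; the combinatorial type of $\gamma$ (the ordered list of cells traversed together with the entry and exit edges) therefore takes only finitely many values. Developing the corresponding chain of cells into $\mathbb{C}$ by composing the transition maps of the atlas, $\gamma$ lifts to a straight segment joining the developed images of its two endpoints, and these images are determined by the combinatorial type; hence the lift, and so $\gamma$ itself, is uniquely determined by its combinatorial type. (For the cells that are affine cylinders the same holds: a saddle connection can only cross such a cylinder in a direction transverse to its closed geodesics, and such a crossing is again a straight chord fixed by its entry and exit edges.) Consequently there are finitely many saddle connections of bounded combinatorial length.

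The heart of the argument is the reverse implication: a saddle connection of large combinatorial length cannot be a $k$-horizon saddle connection. Suppose $\ell(\gamma)> k\,|E|$. By the pigeonhole principle some edge $e\in E$ is crossed by $\gamma$ at more than $k$ interior points $z_{1},\dots,z_{j}$, with $j>k$. Now $e$ is itself a saddle connection, lying in a single direction $\theta_{e}$, and it is contained in a leaf of the directional foliation $\mathcal{F}_{\theta_{e}}$; since the interior of $e$ is free of singularities, a one-sided tubular neighborhood of $e$ is a flow box foliated by leaf segments of $\mathcal{F}_{\theta_{e}}$ parallel to $e$. At each $z_{i}$ the segment $\gamma$ crosses $e$ transversally (its direction differs from $\theta_{e}$, otherwise it would not cross), so for a sufficiently small height $s_{0}$ the leaf segment at height $s_{0}$ meets $\gamma$ once near each $z_{i}$. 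Choosing this single leaf segment, we obtain one connected trajectory crossing $\gamma$ in at least $j>k$ distinct points; extending it to a full trajectory only increases the count. Thus some trajectory crosses $\gamma$ strictly more than $k$ times, so $\gamma$ is not a $k$-horizon saddle connection.

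Combining the two implications, every $k$-horizon saddle connection satisfies $\ell(\gamma)\le k\,|E|$, and there are finitely many such, which proves the lemma. The step I expect to be the main obstacle is the crux in the third paragraph, and specifically the need to realize all $j$ crossings by a \emph{single} connected trajectory rather than by many different leaves: this is exactly what the flow-box (parallel-leaf) construction is for, and it requires care that a uniform small height $s_{0}$ can be chosen so that one leaf segment simultaneously crosses $\gamma$ near each of the finitely many points $z_{i}$. A secondary point to check is the treatment of the affine cylinders in the decomposition, both in the finiteness count of short saddle connections and in verifying that crossings inside a cylinder are genuinely transversal chords.
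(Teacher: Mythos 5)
Your proposal follows the same skeleton as the paper's proof: both start from the identical decomposition (a maximal system of non-intersecting saddle connections cutting $X$ into triangles and affine cylinders of angle at least $\pi$) and both reduce the lemma to the statement that a $k$-horizon saddle connection has bounded crossing number with the finitely many edges of this system. The differences are in how the two halves are justified. The paper is terse: it asserts that there are finitely many homotopy classes of arcs crossing each edge at most $k$ times and that each class contains a unique geodesic representative minimizing geometric intersection number, and it leaves implicit the link between the horizon condition and the bounded crossing number. You supply that link explicitly and correctly: by pigeonhole, $\ell(\gamma)>k\,|E|$ forces more than $k$ transversal interior crossings with a single edge $e$, and since these crossings lie at positive distance from the singular endpoints of $e$, one leaf segment parallel to $e$ at sufficiently small height in a flow box crosses $\gamma$ more than $k$ times, so $\gamma$ is not $k$-horizon. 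For the other half you replace the paper's homotopy-class/unique-geodesic input with a developing-map rigidity argument (the combinatorial type determines the developed endpoints, hence the straight segment, hence $\gamma$), which is more elementary and self-contained than appealing to existence and uniqueness of geodesic representatives.

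The one step that is wrong as literally stated is your parenthetical about cylinder cells: a geodesic chord of an affine cylinder is \emph{not} ``fixed by its entry and exit edges,'' because developing the chain of cells across a cylinder requires a choice of lift, i.e.\ the chord carries a winding number around the cylinder, and distinct windings with the same entry and exit edges give distinct saddle connections with the same value of $\ell$. (In a flat cylinder the winding is genuinely unbounded, so this cannot be waved away.) Your proof is saved by the specific shape of the cells: along a straight line in $\mathbb{C}$ the argument function is monotone with total variation less than $\pi$, so a developed chord spans an angular sector of extent less than $\pi$, while the deck transformation of an affine cylinder of angle $\theta\geq\pi$ shifts the angular coordinate by $\theta$; hence the winding of any crossing takes only boundedly many values. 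Enriching your combinatorial type by this bounded winding datum restores the finiteness count, and with that correction the argument is complete.
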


\begin{proof}
We choose a maximal geodesic arc system of $X$ (that is a decomposition into disjoint triangles and affine cylinders of angle at least $\pi$). For any $k$, there is a finite number of free homotopy classes of loops that cross each edge of the system at most $k$ times. There is a unique geodesic representative in every homotopy class and it minimizes the geometric intersection number. Therefore, there cannot be infinitely many distinct $k$-horizon saddle connections.
\end{proof}

\begin{rem}
We actually do not know if there is dilation surface with $k$-horizon saddle connections for arbitrary high $k$. We expect that there is a topological bound on the maximal number of the maximal crossing number of a horizon saddle connection.
\end{rem}

Just like triangulable dilation surfaces define a $SL(2,\mathbb{R})$-invariant open set, absence of $k$-horizon saddle connections defines for every $k$ an invariant open set. 

\begin{proof}[Proof of Theorem 1.2]
Let $X$ be a surface without $k$-horizon saddle connections. We choose a geometric triangulation of $X$ and a neighborhood of $X$ in the moduli space such that the same geodesic triangulation holds in the neighborhood. For any topological arc that crosses the edges more than $k$ times (there is a finite number of such arcs), its geodesic representative in $X$ is formed by several saddle connections each of which is crossed by trajectories with an arbitrary high number of intersections. Such a trajectory still exists in a neighborhood of $X$. Thus, for each arc, there is neighborhood where this arc cannot be a horizon saddle connection (at least a compact part of the trajectory that crosses more than $k$ times persists). Since there is a finite number of such arcs, there is a neighborhood of $X$ where there is no horizon saddle connections.
\end{proof}

\subsection{Genus one}

Even in the simplified situation of dilation surfaces of genus one, triangulated surfaces and surfaces free from horizon saddle connections define distinct open subsets of the strata.\newline

In \cite{G2}, Ghazouani proved that any dilation torus with $n$ singularities can be decomposed into at most $n$ flat and affine cylinders (see Proposition 9). The sketch of the proof is the following: Gauss-Bonnet formula implies that every singularity has a conical angle of $2\pi$. For a given transverse curve, the homeomorphism of the circle induced by the directional flow changes continuously and monotonically with the direction. Therefore, in some directions, the rotation number is rational and there is at least one close geodesic. This closed geodesic belongs to a cylinder. This cylinder is bounded by a chain of saddle connections joining marked points. Since they have a conical angle of $2\pi$, the other side of the chain of saddle connections also belongs to a cylinder.\newline

A dilation surface belongs to the locus of triangulated surfaces $\mathcal{TD}_{1,n}$ in $\mathcal{D}_{1,n}$ if and only if every affine cylinder of the decomposition has an angle strictly smaller than $\pi$.\newline

In the following proposition, we introduce a significantly stronger criterium.

\begin{prop}
For a dilation surface $X$ of genus one that admits a decomposition into $c$ cylinders. Let $A_{1},\dots,A_{c} \subset S^{1}$ be the closures of the sets of directions of geodesics of the cylinders of the decomposition. Exactly one of the following two statements holds:\newline
(i) $\bigcup\limits_{i=1}^{c} A_{i} = S^{1}$ and every boundary saddle connection of the decomposition is a $1$-horizon saddle connection. There are no other horizon saddle connections in the surface.\newline
(ii) $\bigcup\limits_{i=1}^{c} A_{i} \neq S^{1}$ and every saddle connection is crossed infinitely many times by some trajectory.
\end{prop}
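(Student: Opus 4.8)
The plan is to read off everything from the Ghazouani cylinder decomposition together with the trapped-versus-transverse dichotomy recalled in Subsection 2.5. First I would fix the topological picture. Since the cylinders $C_{1},\dots,C_{c}$ have pairwise disjoint interiors and cover the torus, their core curves are pairwise disjoint essential simple closed curves, hence all isotopic; the torus is therefore a cyclic stack of the annuli $C_{1},\dots,C_{c}$ glued along their boundary chains of saddle connections, and it carries a transverse circle coordinate $h\in\mathbb{R}/\mathbb{Z}$ in which each $C_{i}$ occupies a band and each boundary saddle connection sits at one of finitely many levels. For a fixed direction $\theta$, the behaviour of a trajectory inside $C_{i}$ is governed entirely by whether $\theta\in A_{i}$: if $\theta\in A_{i}$ the trajectory is trapped and spirals onto a closed geodesic interior to $C_{i}$ (winding around the core infinitely often but never meeting $\partial C_{i}$ again), whereas if $\theta\notin A_{i}$ it crosses $C_{i}$ transversally and exits the opposite boundary, so that $h$ is strictly monotone across the band. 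The two alternatives of the statement are plainly exclusive and exhaustive, so the real content is the horizon description attached to each.

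For case (i), $\bigcup_{i}A_{i}=S^{1}$, so every direction $\theta$ lies in some $A_{j}$. Fix a boundary saddle connection $\gamma$ at level $h_{\gamma}$ and any trajectory crossing it. By transverse monotonicity the trajectory then climbs (or descends) through the cyclically ordered bands $C_{j},C_{j+1},\dots$ without ever reversing, and it is trapped at the first band whose direction arc contains $\theta$. Since the $c$ consecutive bands following $\gamma$ exhaust all cylinders and at least one of them traps $\theta$, trapping occurs after strictly fewer than $c$ boundary crossings, i.e. before the trajectory can complete a full transverse loop back to $h_{\gamma}$; hence no trajectory crosses $\gamma$ more than once and $\gamma$ is a $1$-horizon saddle connection. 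For the converse half of (i) I would argue that any saddle connection $\sigma$ which is not a boundary one is crossed infinitely often: $\sigma$ meets the interior of some band $C_{i}$ and, not being an interior closed geodesic (its endpoints are singularities, which lie on $\partial C_{i}$), it spans a nondegenerate range of heights there; choosing $\theta$ in the interior of $A_{i}$ so that the limiting closed geodesic sits at a height met by $\sigma$, the corresponding spiralling trajectory crosses $\sigma$ once per revolution, hence infinitely often. Thus the horizon saddle connections are exactly the boundary ones.

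For case (ii), $\bigcup_{i}A_{i}\neq S^{1}$, pick a direction $\theta_{0}$ in the nonempty open set $U=S^{1}\setminus\bigcup_{i}A_{i}$. No band traps in this direction, so every trajectory crosses every band transversally and winds monotonically around the transverse circle forever. The dynamics is then encoded by the first-return map $F$ of the foliation to a transversal meeting all levels; $F$ is an orientation-preserving piecewise-affine circle homeomorphism with finitely many break points, and I would conclude via its recurrence. If $F$ has a periodic point, the associated closed geodesic winds transversally and crosses the boundary saddle connections on its itinerary infinitely often; if $F$ has irrational rotation number, its minimal set produces recurrent leaves returning to every subinterval of the transversal infinitely often. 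Either way a given saddle connection is crossed infinitely many times by a suitable leaf, and when $\sigma$'s own direction lies in the interior of some arc $A_{i}$ one may instead reuse the spiralling argument of case (i) in a nearby trapped direction.

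The main obstacle is precisely this last point. Transverse monotonicity and the trapped-in-one-loop count make the $1$-horizon statement of (i) essentially formal, but showing that \emph{every} saddle connection (not merely the boundary ones, and in every configuration of case (ii)) is crossed infinitely often is genuinely dynamical: affine interval exchange return maps may possess wandering intervals and Cantor minimal sets, so a fixed $\sigma$ could a priori land in a gap of the minimal set for the chosen $\theta_{0}$. I expect to remove this by exploiting the openness of $U$ (resp. of the interior of $A_{i}$) to vary the direction, forcing the transverse footprint of the given $\sigma$ into the recurrent part of $F$; making this recurrence argument work uniformly over all saddle connections is the delicate step.
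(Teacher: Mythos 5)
Your case (i) is essentially the paper's argument: transverse monotonicity of a trajectory across the cyclic stack of bands, plus the fact that when $\bigcup_{i} A_{i} = S^{1}$ every direction is trapped by one of the $c$ cylinders encountered before the trajectory can return to the level of $\gamma$, gives the $1$-horizon bound; and a saddle connection not contained in the boundary chains must fully cross some cylinder (it cannot accumulate, being of finite length), hence is crossed infinitely often by the periodic trajectories of that cylinder. That half is correct and matches the paper.

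The genuine gap is case (ii), and you have flagged part of it yourself. Two things are missing. First, in the rational-rotation-number subcase you only conclude that the closed geodesic in direction $\theta_{0}$ crosses the boundary saddle connections ``on its itinerary'' infinitely often; a single periodic leaf has no reason to meet \emph{every} boundary saddle connection of the decomposition. The paper's missing idea here is to complete the new closed geodesic to a second cylinder decomposition of the torus: closed geodesics of a given decomposition are all freely homotopic (your own observation that disjoint essential simple closed curves on a torus are isotopic), the core classes of the two decompositions have nonzero geometric intersection number, and the second decomposition covers the surface, so every boundary saddle connection of the first decomposition gets swept by, i.e.\ crossed infinitely often by, closed geodesics of the second family rather than by one orbit's itinerary. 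Second, the irrational subcase is left open by your own admission. Your worry is legitimate: a piecewise-affine circle homeomorphism with irrational rotation number can have wandering intervals (Denjoy-type behaviour; this is exactly the phenomenon studied in \cite{BFG}), and if the footprint of a saddle connection lay in a gap of a Cantor minimal set, no orbit would enter that gap twice, so the saddle connection would be crossed at most boundedly often --- contradicting the assertion to be proved. Your proposed repair, varying the direction within the open set $U = S^{1} \setminus \bigcup_{i} A_{i}$, is not carried out: it is not shown that this forces the footprint into the recurrent set, and the natural fallback --- finding some $\theta \in U$ with rational rotation number and reusing the previous argument, using the continuous monotone dependence of the return map on the direction recalled in Subsection 3.1 --- requires ruling out a constant irrational rotation number on $U$, which you do not do. (For what it is worth, the paper itself is terse at this point, simply asserting that in the irrational case the minimal leaves are dense in the whole surface; you have correctly identified the delicate step, but identifying it is not resolving it, so as written your proof of alternative (ii) is incomplete.)
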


\begin{proof}
In the first case, there is a closed geodesic in any direction. Therefore, for any trajectory $\alpha$, there will be a closed geodesic $\gamma$ (or equivalently a chain of saddle connections separating two cylinders of the decomposition) in the same direction $\alpha$ cannot cross. For any saddle connection $\beta$ that is the boundary of two cylinders, it is clear that if $\alpha$ crosses $\beta$ two times, then it should cross $\gamma$ at least one time. Conversely, any saddle connection that does not separate cylinders is crossed infinitely many times by the closed geodesics of the cylinder.\newline

In the second case, we consider a closed geodesic $\gamma$ whose direction is in $\bigcup\limits_{i=1}^{c} A_{i}$ and a direction $\theta$ in the complement of $\bigcup\limits_{i=1}^{c} A_{i}$ that is not the direction of some saddle connection either (since the complement is any open set, it is always realizable). Then we consider the first return map $\phi$ on $\gamma$ defined by the directional flow in direction $\theta$ (every trajectory in such a direction eventually leaves every cylinder whose geodesics have directions $\bigcup\limits_{i=1}^{c} A_{i}$). Map $\phi$ is a homeomorphism of the circle. There are two cases. If the rotation number of $\phi$ is irrational, then the directional foliation in direction $\theta$ has minimal leaves that are dense in the whole surface and cross every saddle connection infinitely many times. If the rotation number is rational, then there is a closed geodesic in direction $\theta$, then it defines a cylinder whose closed geodesics intersect every cylinder of the first decomposition. This cylinder can be completed to form another decomposition into cylinders. Closed geodesics of a given decomposition represent the same free homotopy class. The intersection number of the loops of the two decompositions is not trivial. Therefore, every saddle connection in the boundary of a cylinder of the first decomposition is crossed infinitely many times by some trajectory.
\end{proof}

In any stratum $\mathcal{D}_{1,n}(\lambda)$, we define $\mathcal{H}_{1,n}(\lambda)$ to be the $SL(2,\mathbb{R})$-invariant open set (see Theorem 1.2) formed by surface without horizon saddle connections. We have the following strict inclusions of invariant open sets:\newline
$$ \mathcal{H}_{1,n}(\lambda) \subsetneq \mathcal{DT}_{1,n}(\lambda) \subsetneq \mathcal{D}_{1,n}(\lambda)$$

We proved a dichotomy among dilation tori between those with horizon saddle connections (where one cylinder decomposition covers every direction) and horizon-free dilation tori. In the first case, there is only one cylinder decomposition because any other decomposition would contain a closed geodesic that would cross every cylinder of the first decomposition. Horizon saddle connections make this situation impossible. Therefore, the shape of the cylinders of the unique cylinder decomposition provides global coordinates for this locus in the moduli space. $SL(2,\mathbb{R})$ acts separately on each cylinder of the decomposition. We should not expect any interesting recurrence behaviour.

\subsection{Surfaces with chambers}

A natural question in the study of moduli spaces of geometric structures is about the connected components. In \cite{DFG}, the authors study the space $\mathcal{DT}_{2,1}$ of triangulable dilation surfaces of genus two with only one conical singularity. This space fail to be connected and there is an exceptional connected component formed by surfaces split into two chambers separated by a closed saddle connection. This cannot happen in the framework of translation surfaces and we can understand this situation using horizon saddle connections.\newline

A \textit{chamber} is a dilation surface with boundary formed by a pentagon with two pairs of parallel sides glued on each other. The remaining side is the boundary of the chamber. Gluing the boundaries of two chambers provides a dilation surface of genus two with one singularity.\newline

Clearly, the boundary of a chamber is a $1$-horizon saddle connection. Since such a chamber cuts out the surface into two connected components, a trajectory that crosses it should cross it again in the reverse way.\newline

Considering only surfaces without horizon saddle connections, we eliminate the exceptional component formed by surfaces with two chambers. Thus, we could expect that strata of dilation surfaces without horizon saddle connections have exactly the same connected components as translation surfaces.

\subsection{Hopf and quasi-Hopf surfaces}

Hopf surfaces are examples of non-triangulable dilation surfaces. They are the only case of surfaces of genus at least two where the Veech group is not discrete (see Section 4 for details). They are also the only surfaces whose saddle connections all belong to the same direction.

\begin{defn}
A Hopf surface is a dilation surface covered by a union of disjoint affine cylinders of angle $k\pi$ where $k$ is an integer number and such that the saddle connections of the boundary of every affine cylinder lie in the same directions. In particular, every saddle connection of a Hopf surface is a $1$-horizon saddle connection.
\end{defn}

We introduce a mild generalization of Hopf surfaces.

\begin{defn}
A dilation surface $X$ is quasi-Hopf if there is a pair of directions $\alpha$ and $\beta$ such that $X$ is covered by a union of disjoint affine cylinders whose boundary saddle connections belong to directions $\alpha$ or $\beta$. We distinguish integer affine cylinders (whose angle is an integer multiple of $\pi$) from non-integer affine cylinders. 
\end{defn}

Some quasi-Hopf surfaces can be conjugated (using the action of $SL(2,\mathbb{R})$) to surfaces whose affine cylinders have an angle of $\frac{\pi}{2}$. They are examples of triangulable surfaces that admits nevertheless horizon saddle connections, see Figure 2.

\begin{figure}
\includegraphics[scale=0.3]{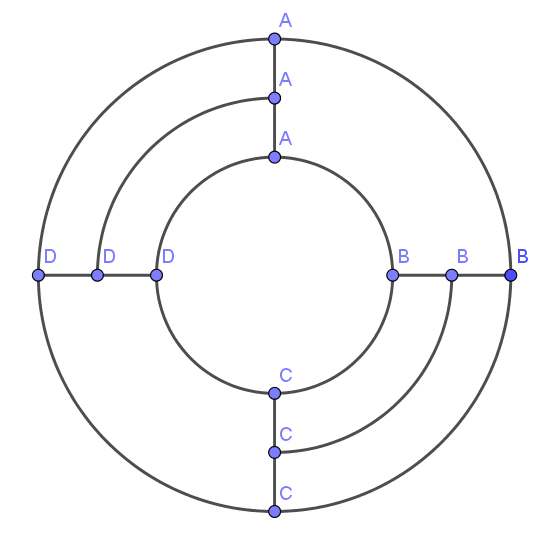}
\caption{A quasi-Hopf surface of genus $2$ with four conical singularities of angle $4\pi$. It is covered by six affine cylinders of angle $\frac{\pi}{2}$. The eight saddle connections drawn are horizon.}
\end{figure}

\begin{prop}
In a quasi-Hopf surface, if the union of the directions of the closed geodesics of any two consecutive cylinders in the decomposition is the whole circle of directions (except perhaps $\alpha$ and $\beta$), then every boundary saddle connection of a cylinder in the decomposition is a $1$-horizon saddle connection. There is no other horizon saddle connection.
\end{prop}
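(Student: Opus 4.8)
The plan is to adapt the argument behind the genus-one dichotomy to the relative setting of consecutive cylinders, and to treat the two distinguished directions $\alpha$ and $\beta$ separately, since the covering hypothesis says nothing about them. Throughout I use the local picture of Subsection 2.5: an affine cylinder $C$ of the decomposition is foliated by closed geodesics whose directions fill an arc $I_{C}\subset S^{1}$ whose two endpoints are exactly the directions of the two boundary components of $C$. By the quasi-Hopf hypothesis these endpoints lie in $\{\alpha,\beta\}$. Moreover, a trajectory whose direction $\theta$ lies in $\mathrm{int}(I_{C})$ and which enters $C$ accumulates on the closed geodesic $\gamma_{\theta}\subset C$ and never leaves $C$ again, whereas a trajectory with direction outside $I_{C}$ traverses $C$ from one boundary to the other. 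The statement splits into two claims, which I establish in turn: (a) every boundary saddle connection is $1$-horizon, and (b) every other saddle connection is crossed infinitely often.

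\textbf{Claim (a).} Let $s$ be a boundary saddle connection shared by two consecutive cylinders $C_{1},C_{2}$, and let $\gamma$ be a trajectory of direction $\theta$. If $\theta\neq\alpha,\beta$, the hypothesis gives $\theta\in I_{C_{1}}\cup I_{C_{2}}$, and since the endpoints of both arcs lie in $\{\alpha,\beta\}$ we may assume $\theta\in\mathrm{int}(I_{C_{1}})$. The leaf $\gamma_{\theta}\subset C_{1}$ belongs to the same directional foliation as $\gamma$, so $\gamma$ cannot cross it; as $\gamma_{\theta}$ separates the two boundary components of $C_{1}$ and traps $\gamma$ once it has entered $C_{1}$, the trajectory $\gamma$ meets $s$ at most once. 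If $\theta=\alpha$ (the case $\beta$ being symmetric), then either $s$ itself has direction $\alpha$, in which case $\gamma$ is parallel to $s$ and meets it in no transverse point, or $s$ has direction $\beta$, in which case the cylinder $C_{i}$ adjacent to $s$ has $\beta$ as one endpoint of $I_{C_{i}}$. In the latter situation $\gamma$ is again trapped in $C_{i}$ after crossing $s$ at most once: either $\alpha\in\mathrm{int}(I_{C_{i}})$ and $\gamma$ accumulates on the genuine closed geodesic $\gamma_{\alpha}$, or $\alpha$ is the other endpoint of $I_{C_{i}}$ and $\gamma$ accumulates on the $\alpha$-boundary itself. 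In all cases $s$ is crossed at most once, so it is a $1$-horizon saddle connection.

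\textbf{Claim (b).} A saddle connection $\sigma$ that is not a boundary segment of the decomposition cannot lie entirely on the boundaries, hence it enters the interior of some cylinder $C$ and meets transversally the closed geodesics $\gamma_{\theta}$ for all $\theta$ in a subinterval of $I_{C}$. Fix such a $\theta\in\mathrm{int}(I_{C})$ with $\gamma_{\theta}$ crossing $\sigma$. A trajectory of direction $\theta$ that spirals toward $\gamma_{\theta}$ winds around $C$ infinitely many times, and once it is close enough to $\gamma_{\theta}$ each turn crosses $\sigma$; hence $\sigma$ is crossed infinitely often and is therefore not a $k$-horizon saddle connection for any $k$. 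This is precisely the mechanism that distinguishes interior segments (crossed infinitely by the spiralling leaves) from boundary segments (on which $\gamma_{\theta}$ never lands).

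\textbf{Main obstacle.} The genuinely new point compared with the genus-one dichotomy is the behaviour in the two excluded directions $\alpha$ and $\beta$, where the covering hypothesis gives no information: one must justify the trapping of trajectories whose direction is an \emph{endpoint} of a cylinder's direction arc, i.e. the degenerate limit in which the ``closed geodesic'' collapses onto the boundary. I would make this rigorous in the annular-sector model $\{\theta_{1}\le\arg z\le\theta_{2}\}/(z\sim\lambda z)$, checking directly that a straight line in the endpoint direction $\theta_{1}$ accumulates on the boundary ray $\arg z=\theta_{1}$ and leaves through the opposite side at most once. Everything else follows the template of the previous proposition, together with the finiteness of horizon saddle connections established earlier.
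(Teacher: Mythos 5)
Your proof is correct and follows essentially the same route as the paper's: a saddle connection crossing the interior of a cylinder is met infinitely often by (leaves spiralling onto) closed geodesics, so horizon saddle connections lie on cylinder boundaries, and each boundary saddle connection is $1$-horizon because the two adjacent cylinders together trap every transverse trajectory after at most one crossing. Your explicit treatment of the degenerate directions $\alpha$ and $\beta$ in the annular-sector model is a welcome completion of a point the paper's one-line argument leaves implicit.
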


\begin{proof}
A horizon saddle connection cannot cross a cylinder because it would be crossed infinitely many times by a closed geodesic. Therefore, they belong to the boundary of the cylinders of the decomposition. Each of them is a $1$-horizon saddle connection because it is the boundary of two cylinders that together admits a closed geodesic in every direction (excepted $\alpha$ and $\beta$). Therefore, any trajectory crossing such a saddle connection remains forever in one of the two cylinders.
\end{proof}

The action of the one-parameter subgroup $A$ of diagonal matrices of $SL(2,\mathbb{R})$ contracting direction $\alpha$ and expanding direction $\beta$ on a quasi-Hopf surface is interesting. It crucially depends on the commensurability of the dilation ratios of affine cylinders.

\begin{prop}
We consider a quasi-Hopf surface $X$ formed by affine cylinders whose boundary saddle connections belong to exactly two direction $\alpha$ and $\beta$. The $A$-orbit of $X$ in the stratum is closed if and only the dilation ratios of non-integer affine cylinders are log-commensurable.
\end{prop}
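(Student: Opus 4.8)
The plan is to reduce the statement to the study of a linear flow on a torus by analysing the action of $A$ cylinder by cylinder. Write $X$ as a union of affine cylinders $C_1,\dots,C_c$ with dilation factors $\lambda_1,\dots,\lambda_c$, glued along boundary saddle connections lying in the two directions $\alpha$ and $\beta$. First I would check that this cylinder decomposition is preserved by the flow: since $A_t$ fixes the two directions $\alpha$ and $\beta$, it sends boundary saddle connections to boundary saddle connections, so it permutes the cylinders of the decomposition. Moreover $A_t$ preserves each $\lambda_i$ (the linear holonomy is $SL(2,\mathbb{R})$-invariant) and preserves the arc of closed-geodesic directions of $C_i$, hence its angle $\theta_i$; thus $A_t\cdot C_i$ is again a cylinder with the same $(\lambda_i,\theta_i)$. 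Consequently the only moduli of $C_i$ that can move under the flow are the positions of its two boundary marked points, i.e. a single twist parameter.

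Next I would compute this twist explicitly, extending the computation of Subsection 2.4. Placing $\alpha$ and $\beta$ as the two eigendirections of $A_t$, with eigenvalues $e^{-t}$ and $e^{t}$, and modelling $C_i$ as an angular sector quotiented by $z\mapsto\lambda_i z$, one sees that in the chart of $A_t\cdot C_i$ a marked point on a boundary in direction $\alpha$ has its log-radius shifted by $-t$, while one on a boundary in direction $\beta$ is shifted by $+t$. The isomorphism invariant of the marked cylinder is the difference $\delta_i$ of the log-radii of its two boundary marked points, taken modulo $\ln\lambda_i$. Here the integer/non-integer dichotomy becomes decisive. A non-integer cylinder has its two boundaries in the two distinct directions $\alpha$ and $\beta$, so $\delta_i(t)=\delta_i(0)+2t \pmod{\ln\lambda_i}$ advances at constant speed and is periodic of period $\tfrac{\ln\lambda_i}{2}$. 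An integer cylinder has angle a multiple of $\pi$, so its two extreme geodesic directions coincide and both its boundaries lie in the \emph{same} direction; both marked points are then shifted by the same amount and $\delta_i$ is $A$-invariant. Thus integer cylinders contribute a fixed point while only non-integer cylinders contribute a genuinely rotating phase.

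I would then assemble these phases. Sending $X$ to the tuple of non-integer twists defines a continuous $A$-equivariant map from the $A$-orbit of $X$ to the torus $\mathbb{T}=\prod_i\mathbb{R}/(\ln\lambda_i)\mathbb{Z}$, the product being over the non-integer cylinders, under which the flow becomes the linear flow $t\mapsto(\delta_i(0)+2t)_i$. A common return time $T>0$, giving $A_T\cdot X\cong X$, exists precisely when $2T\in(\ln\lambda_i)\mathbb{Z}$ for every non-integer $i$, that is when the numbers $\ln\lambda_i$ are pairwise commensurable, which is the log-commensurability hypothesis; in that case the orbit is a circle, hence closed. For the converse, if the $\ln\lambda_i$ are not all commensurable the associated linear flow on $\mathbb{T}$ has an orbit dense in a subtorus of dimension at least two, hence not closed, and pulling this back shows the $A$-orbit of $X$ is not closed in the stratum.

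The heart of the argument, and the step I expect to be the main obstacle, is to justify that the twist tuple really records all the moving moduli and that the map to $\mathbb{T}$ is proper. Because dilation surfaces carry non-trivial linear holonomy there is no global notion of length, so I must verify that all length ratios, the integer twists, and the gluing parameters between adjacent cylinders are invariant under $A_t$ up to isomorphism (using that global homothety acts trivially on the moduli space), so that returning the non-integer twists returns the whole surface up to isomorphism. A further point to settle is that $A_T\cdot X\cong X$ might be realised by an isomorphism permuting the cylinders; such a permutation can only interchange cylinders with equal $(\lambda_i,\theta_i)$, so after passing to a suitable power it fixes each cylinder and the commensurability conclusion is unaffected. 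Once properness of the map to $\mathbb{T}$ is established, closedness of the orbit upstairs is equivalent to closedness downstairs, and the dichotomy follows from the classical description of orbits of linear flows on tori.
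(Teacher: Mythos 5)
Your proposal is correct and follows essentially the same route as the paper: reduce the $A$-action to the twist parameters of the non-integer cylinders (integer cylinders and all length ratios along a fixed direction being invariant), identify the flow with a linear flow on the torus $\prod_i \mathbb{R}/(\ln\lambda_i)\mathbb{Z}$ with period $\tfrac{\ln\lambda_i}{2}$ per factor, and conclude closedness of the orbit iff the $\ln\lambda_i$ are commensurable. Your handling of extra symmetries via passing to a power matches the paper's introduction of the integers $d_i$ replacing the period by $\tfrac{\ln\lambda_i}{2d_i}$, which leaves the commensurability criterion unchanged.
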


\begin{proof}
For any quasi-Hopf surface, we consider the set $C$ of non-integer affine cylinders (we also choose an orientation on them in such a way that it goes from direction $\alpha$ to direction $\beta$). For every cylinder $i \in C$, $\lambda_{i}$ is the linear holonomy along the hyperbolic geodesics of the cylinder. We consider the flow $A^{t}$ that contracts direction $\alpha$ with a factor $e^{-t}$ and expands direction $\beta$ with a factor $e^{t}$.\newline
The dilation action preserves any affine cylinder of $C$ but modifies their twist, see Subsection 2.4. Every non-integer affine cylinder of the decomposition is preserved (with its twist) by any element $A_{t}$ of the flow such that $t=\frac{ln(\lambda)}{2}$. Besides, there could be additional symmetries such that the subgroup that preserves the cylinder and its twist is generated by $t=\theta_{i}=\frac{ln(\lambda)}{2d_{i}}$ where $d_{i}$ is an integer. These exponents $(\theta_{1},\dots,\theta_{c})$ define a \textit{characteristic ratio}. The action of $A^{t}$ on a cylinder $i$ only depends on the class of $t$ in $ \mathbb{R}/\theta_{i}\mathbb{Z}$. Equivalently, we could consider the image of $t$ in $\mathbb{R}^{C}/\mathbb{Z}^{C}$ by the map $t \mapsto (\theta_{1}t,\dots,\theta_{c}t)$. Clearly, the $A$-orbit is closed if and only if exponents $(\theta_{i})_{i \in C}$ are commensurable. In other words, exponents $(ln(\lambda_{i}))_{i\in C}$ should be commensurable.
\end{proof}

The latter condition defines the subclass of \textit{rational quasi-Hopf surfaces}.

\subsection{Dilation surfaces with finitely many saddle connections}

An open question raised in \cite{BFG} is about characterization of dilation surfaces with finitely many saddle connections. A related question asks if for any dilation surfaces, every point belongs to a saddle connection or a closed geodesic. Surfaces with finitely many saddle connections provides easy examples of this phenomenon. It also exemplifies $2$-horizon saddle connections, see Figure 3. We can generalize this example to get $k$-horizon saddle connections for an arbitrary number $k$.\newline

We provide here a characterization of dilation surfaces with finitely many saddle connections in terms of types of trajectories. Trajectories are oriented. We say that a trajectory is \textit{critical} if it starts from a conical singularity (and is not a saddle connection).\newline

\begin{figure}
\includegraphics[scale=0.3]{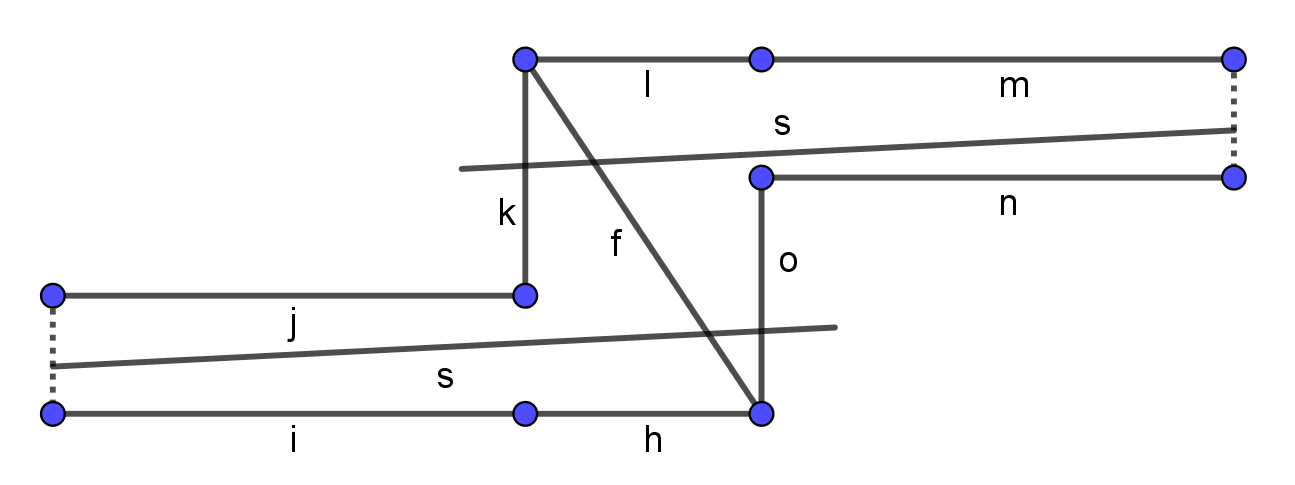}
\caption{A dilation surface where sides $h,i,j,k,l,m,n,o$ are $1$-horizon saddle connections (boundaries of affine cylinders of angle at least $\pi$) whereas saddle connection $f$ is $2$-horizon. Trajectory $s$ is an example of trajectory cutting twice $f$.}
\end{figure}

\begin{thm}
For a dilation surface $X$, the following statements are equivalent:\newline
(i) There are finitely many saddle connections in $X$.\newline
(ii) Every trajectory either hits a conical singularity or accumulates on a closed geodesic of an affine cylinder of angle at least $\pi$.
\end{thm}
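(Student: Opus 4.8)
The plan is to prove the two implications separately, resting on one geometric preliminary that pins down the angular size of the relevant cylinders. \emph{Preliminary claim:} if $X$ has finitely many saddle connections, then every maximal affine cylinder of $X$ has angle at least $\pi$. I would argue the contrapositive. A maximal affine cylinder $C$ of angle $\theta_{C}<\pi$ is triangulable, so by Veech's theorem (and the discussion of Subsection 2.5) it is crossed by saddle connections joining its two boundaries. Fix a singularity $p$ on one boundary of $C$ and let $I_{C}$ be the arc of directions of closed geodesics of $C$. As the direction of a separatrix issued from $p$ tends to an endpoint of $I_{C}$ from outside, the separatrix winds an unbounded number of times around $C$ before reaching the opposite boundary, and its exit point sweeps across that boundary once per extra turn, hence meets the boundary singularities infinitely often. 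Each such meeting produces a saddle connection, and distinct winding numbers give distinct saddle connections; thus an angle-$<\pi$ maximal cylinder forces infinitely many saddle connections. This is exactly the mechanism that will confine all trapping to cylinders of angle at least $\pi$.

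For (i) $\Rightarrow$ (ii): assuming finitely many saddle connections, the maximal geodesic arc system is finite and (Subsection 2.5) cuts $X$ into finitely many triangles and finitely many affine cylinders of angle at least $\pi$. Let $\gamma^{+}$ be a forward trajectory meeting no singularity. If $\gamma^{+}$ ever enters an angle-$\geq\pi$ cylinder it is trapped and accumulates on the closed geodesic of its own direction (Subsection 2.5), giving (ii). Otherwise $\gamma^{+}$ stays forever in the triangulable locus and crosses the finitely many edges infinitely often, so its $\omega$-limit set $L$ is a nonempty compact invariant set avoiding the trapping cylinders. A Poincaré--Bendixson-type analysis shows $L$ cannot be a nontrivial minimal set, since such a set would generate infinitely many saddle connections in nearby directions; hence $L$ contains a closed geodesic $\delta$. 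By the preliminary claim the maximal cylinder of $\delta$ has angle at least $\pi$, and since $\gamma^{+}$ accumulates on $\delta$ it must enter that cylinder and be trapped, contradicting the standing assumption. So $\gamma^{+}$ is trapped in an angle-$\geq\pi$ cylinder, which is (ii).

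For (ii) $\Rightarrow$ (i): I argue by contraposition, so suppose $X$ has infinitely many saddle connections. Each singularity emits finitely many separatrices in any fixed direction, so there are finitely many saddle connections per direction; an infinite family of distinct saddle connections $\sigma_{n}$ therefore has directions $\theta_{n}$ accumulating at some $\theta_{*}$. By the argument of the finiteness lemma only finitely many saddle connections cross the arc system a bounded number of times, so after passing to a subsequence the crossing numbers of the $\sigma_{n}$ tend to infinity. Fixing a transversal and passing to a further subsequence, the crossing points of the $\sigma_{n}$ converge to a point $x_{*}$ that I may take off the finitely many separatrices of direction $\theta_{*}$ meeting the transversal. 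The leaf $\ell$ through $x_{*}$ in direction $\theta_{*}$ then meets no singularity and, on every compact piece, is a limit of pieces of the $\sigma_{n}$. By (ii), $\ell$ is trapped in some affine cylinder $C$ of angle at least $\pi$. For $n$ large, $\sigma_{n}$ follows $\ell$ into $C$ (its crossing number is large enough to reach $C$), and since $C$ traps trajectories of every direction, $\sigma_{n}$ is then trapped and never reaches a singularity, contradicting that $\sigma_{n}$ is a saddle connection. Hence $X$ has finitely many saddle connections.

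The \emph{main obstacle} is the limiting/compactness argument underlying both directions: I must make precise that the directional foliation depends continuously on the direction so that trajectories converge on compact pieces, that entry into an angle-$\geq\pi$ cylinder is an open condition persisting under these limits, and above all that the $\omega$-limit set of a non-trapped, non-singular trajectory on a surface with finitely many saddle connections contains an honest closed geodesic --- this requires ruling out nontrivial minimal sets and handling the delicate case of accumulation onto a union of saddle connections. The winding estimate of the preliminary claim is the other technical point. It is worth noting that this scheme works directly with directions and trapping and never bounds the number of times a saddle connection is crossed, so it is independent of the open question raised in Remark 3.3.
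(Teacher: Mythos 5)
Your preliminary claim (a maximal affine cylinder of angle less than $\pi$ forces, via winding separatrices, infinitely many cylinder-crossing saddle connections) is correct, and a version of it is indeed used inside the paper's proof. But both of your implications have a genuine gap at precisely the points you yourself flag as ``the main obstacle,'' and in each case the missing step is not a technicality but the entire content of the theorem. In (i) $\Rightarrow$ (ii), the Poincar\'e--Bendixson-type step is unavailable: on surfaces of positive genus, $\omega$-limit sets of directional foliations can be exceptional (Cantor-like) minimal sets --- these genuinely occur for dilation surfaces, as the attractors of affine interval exchange transformations studied in \cite{BFG} show --- and your one-clause justification, that such a set ``would generate infinitely many saddle connections in nearby directions,'' is an unproven assertion which is essentially the theorem in disguise. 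Moreover, even granting the exclusion of nontrivial minimal sets, your dichotomy is incomplete: the limit set $L$ can be a union of saddle connections and singularities (a geodesic polycycle with contracting linear holonomy attracts trajectories from one side --- this is exactly how boundaries of affine cylinders behave), and such an $L$ contains no closed geodesic, so ``hence $L$ contains a closed geodesic $\delta$'' does not follow. The paper avoids classifying $\omega$-limit sets altogether: it first proves that every \emph{critical} trajectory either is a saddle connection or enters a cylinder of angle at least $\pi$, by dividing the separatrices at each singularity into finitely many sectors, immersing each sector as an infinite cone via Veech's extension theorem, and running a strictly angle-increasing incidence relation on the finitely many sectors; it then cuts the surface in a fixed direction along the finitely many saddle connections and critical trajectories into strips, inside which every trajectory travels between the same two cylinders as the bounding critical ones. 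You would need to supply an argument of comparable substance where you currently have one sentence.

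In (ii) $\Rightarrow$ (i) the gap is the phrase ``that I may take off the finitely many separatrices of direction $\theta_{*}$'': this choice is not available in general, since the crossing points of the $\sigma_{n}$ may accumulate \emph{only} at points lying on separatrices in direction $\theta_{*}$ --- typically when the $\sigma_{n}$ converge in direction to a saddle connection $\gamma$. That is exactly the delicate case, and it is where the paper's proof does its work: when the limiting critical trajectory $\gamma$ is a saddle connection ending at a singularity $B$, the sequence passes $B$ on one side and accumulates on the continuation $\delta$ leaving $B$ at angle $\pi$, and one iterates along the resulting chain of saddle connections until either a trapped critical trajectory appears (trapping is an open condition on directions, contradicting accumulation) or the chain itself yields a contradiction. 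You explicitly list ``accumulation onto a union of saddle connections'' among your unresolved obstacles, so this branch is missing from your argument, and with it the implication. In short: your scheme (trajectory-by-trajectory $\omega$-limit analysis) is genuinely different from the paper's separatrix-first approach, but where the paper has the sector/incidence mechanism and the saddle-connection-chasing argument, your proposal has two assertions that would each require a proof of the same order of difficulty as the theorem itself.
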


\begin{proof}
We first prove that Proposition (i) implies Proposition (ii). For a surface $X$ with finitely many saddle connections, around every conical singularity, the directions of critical trajectories (finite cover of the unit circle) are divided into finitely many \textit{sectors} separated by the directions of saddle connections. The critical trajectories of a given sector form the immersion of an infinite cone in $X$ for the following reason. If the immersion of the singular sector cannot be extended in some point, then, up to zooming close to this point, we can find an embedding of the disk that does not extend to its boundary. Veech's theorem (see Appendix of \cite{DFG}) then implies that the sector in fact belongs to a hyperbolic cylinder of angle at least $\pi$ (and the angle of the sector is at least $\pi$).\newline
Thus, a small enough neighborhood of the singularity in such a cone either belongs to the triangulable locus or to a cylinder of angle at least $\pi$ (the boundary of the triangulable locus is a union of saddle connections). In the latter case, the angle of the sector is exactly $\pi$ and every critical trajectory of the sector belongs to this cylinder (and accumulates on one of its hyperbolic geodesics). The other sectors have an angle strictly smaller than $\pi$ because there are enough saddle connections to get a geometric triangulation.\newline
There is an incidence relation on the set of sectors. The left boundary of the image of the infinite cone of a sector $A$ is formed by saddle connections and at most one infinite critical trajectory. There are finitely many saddle connections so either the boundary contains an infinite critical trajectory or there is a periodicity in the saddle connections of the boundary. In the latter case, these saddle connections are the boundary of a cylinder. Since there are infinitely many saddle connections in a flat cylinder or a cylinder of angle smaller than $\pi$, this cylinder is one of those with an angle of magnitude at least $\pi$. If there is no such periodicity, the boundary contains an infinite critical trajectory that belongs to another sector $B$. The critical trajectories of sector $B$ that belong to the directions of sector $A$ are entirely contained in the infinite cone of sector $A$. This implies in particular that the total angle of sector $B$ is strictly bigger than that of sector $A$, see Figure 4. Since there are finitely many sectors, the sectors of the triangulable locus of $X$ with the biggest angle are thus incident to sectors of angle $\pi$ (those that belong to affine cylinders of angle at least $\pi$). If every critical trajectory of a sector finally enters into some affine cylinders of angle at least $\pi$, then it is clearly the same for every sector that is incident to it (the property to accumulate in some closed geodesic of a cylinder is an open condition of the direction of the trajectory). We prove thus step by step that in every sector, every critical trajectory ends in some affine cylinder of angle at least $\pi$.\newline

\begin{figure}
\includegraphics[scale=0.3]{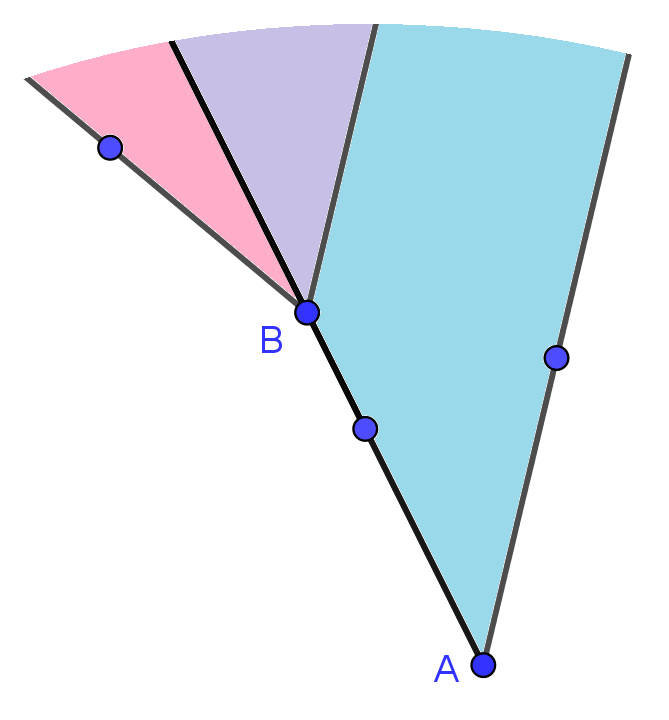}
\caption{The infinite cones defined by sectors $A$ and $B$ are in blue and red. The intersection of these two cones is in purple.}
\end{figure}

Then, for any direction $\theta$ (without loss of generality, we assume it is the horizontal direction), we draw the saddle connections and the critical trajectories in this direction. We get finitely many infinite horizontal strips (no flat cylinders since there are finitely many saddle connections). The intersection of any critical trajectory with the triangulable locus is compact. Therefore, in each of these strips, each horizontal trajectories goes from the same affine cylinder to the same affine cylinder.\newline

Then, we prove that that Proposition (ii) implies Proposition (i). Proposition (ii) implies in particular that every trajectory starting from a conical singularity either is a saddle connection or accumulates on a closed geodesic of an affine cylinder. If moreover there are infinitely many saddle connections in the surface, then there is a conical singularity $A$ starting from which a critical trajectory $\gamma$ is approached in direction by a sequence of saddle connections $(\gamma_{n})_{n \in \mathbb{N}}$ starting from $A$. Without loss of generality, we assume the sequence is monotonically converging in the counterclockwise direction. If $\gamma$ finally enters into an affine cylinder of angle at least $\pi$ and accumulates on a closed geodesic, then this is the same for every critical trajectory starting from $A$ with a direction in some open neighborhood of $\theta$. This contradicts accumulation of saddle connections in this direction. On the contrary, if $\gamma$ is a saddle connection from $A$ to another singularity $B$, then sequence $(\gamma_{n})_{n \in \mathbb{N}}$ also accumulates on the trajectory $\delta$ starting from $B$ in the direction obtained by a counterclockwise rotation of angle $\pi$ from the ending direction of $\gamma$. The infinite sequence of saddle connections just passes $\gamma$ and $B$ from the right, see Figure 5. The same reasoning holds for $\delta$ if it is also a saddle connection. Finally, we automatically get a contradiction with the existence of an accumulation direction of saddle connections. Consequently, there are finitely many of them.

\begin{figure}
\includegraphics[scale=0.3]{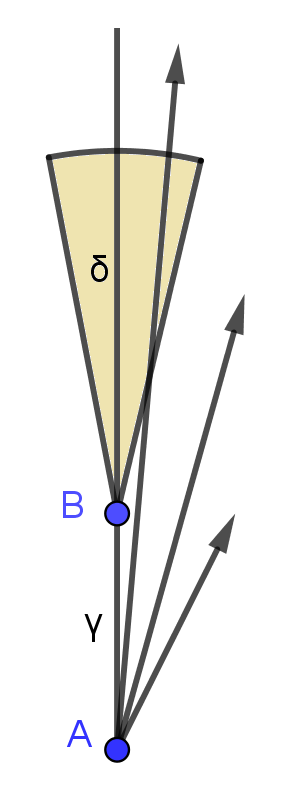}
\caption{Saddle connection $\gamma$ continued by critical trajectory $\delta$ on which accumulates a sequence of saddle connections. The infinite open cone starting from sector $B$ formed by trajectories entering in some affine cylinder is in yellow.}
\end{figure}
\end{proof}

\section{Veech groups}

\subsection{General results}

The first structure theorem for Veech groups of dilation surfaces has been proved in \cite{DFG}.

\begin{thm}[Dichotomy theorem]
Let $X$ be a dilation surface of genus $g \geq 2$, then there are two possible cases:\newline
(i) $X$ is a Hopf surface and $V(X)$ is conjugated to the subgroup of upper triangular elements of $SL(2,\mathbb{R})$;\newline
(ii) $V(X)$ is a discrete subgroup of $SL(2,\mathbb{R})$.
\end{thm}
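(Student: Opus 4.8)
The plan is to argue by contraposition: assuming $V(X)$ is not discrete, I will show that $X$ must be a Hopf surface with $V(X)$ conjugate to the full upper-triangular subgroup of $SL(2,\mathbb{R})$. Since the $SL(2,\mathbb{R})$-action on the (Hausdorff) moduli orbifold is continuous, the stabilizer $V(X)$ is a \emph{closed} subgroup, so $V(X)=\overline{V(X)}$. If it is not discrete it therefore has positive dimension, and its identity component $V(X)^{0}$ is conjugate to one of the standard connected subgroups of $SL(2,\mathbb{R})$: the rotation group $SO(2)$, the unipotent group $N$, the diagonal group $A$, the Borel subgroup $B=AN$, or all of $SL(2,\mathbb{R})$. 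The goal is to eliminate every possibility except $B$ and then to force the Hopf structure.

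First I would rule out any case in which $V(X)^{0}$ acts transitively on directions, i.e. $SO(2)\subseteq V(X)$ (this also disposes of $V(X)^{0}=SL(2,\mathbb{R})\supseteq SO(2)$). The point is that $X$ carries a \emph{preferred finite set of directions} that $V(X)$ must preserve, contradicting transitivity. Cutting $X$ along a maximal system of non-intersecting saddle connections (Subsection 2.5) decomposes it into triangles and affine cylinders of angle at least $\pi$; the boundary saddle connections of the latter are $1$-horizon by Proposition 3.1 and hence finite in number by Lemma 3.2. If $X$ has at least one such cylinder, their core directions form a nonempty finite $V(X)$-invariant subset of $\mathbb{RP}^{1}$, which no continuous rotation symmetry can fix. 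If instead $X$ is triangulable (Theorem 2.3), I would argue that a continuous rotation symmetry produces infinitely many mutually non-homotopic saddle connections of uniformly bounded combinatorial complexity, contradicting the homotopy-counting of Lemma 3.2. In either case $SO(2)\not\subseteq V(X)$, so $V(X)^{0}$ fixes at least one direction $\theta_{0}\in\mathbb{RP}^{1}$ and is conjugate into $B$.

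It then remains to treat $V(X)^{0}\in\{N,A,B\}$, where a nontrivial one-parameter family of affine automorphisms fixes $\theta_{0}$ while shearing and/or scaling along it. I would show that this rigidity forces $X$ to be covered by affine cylinders whose core geodesics lie in direction $\theta_{0}$: any portion of $X$ not contained in such a cylinder (a triangulable piece, or a saddle connection transverse to $\theta_{0}$ of bounded complexity) would be carried by the continuous symmetry into infinitely many distinct bounded-complexity saddle connections, again contradicting Lemma 3.2. A surface covered by affine cylinders all sharing a single direction is precisely a Hopf surface (Definition 3.5). Conversely, for a Hopf surface every element of the upper-triangular group preserves this single-direction structure, since scaling permutes the cylinders and shearing along $\theta_{0}$ only adjusts their twists (Subsection 2.4); hence $V(X)^{0}$ is all of $B$ and $V(X)$ equals the upper-triangular group up to conjugacy. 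This closes the contrapositive: outside the Hopf case the Veech group is discrete.

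The step I expect to be the main obstacle is the exclusion of a continuous \emph{elliptic} symmetry for triangulable surfaces carrying no horizon saddle connections, where Lemma 3.2 does not apply to the surface's own saddle connections and one must instead control the combinatorics of image saddle connections under a near-identity symmetry: a saddle connection that is simple for the rotated triangulation may cross the original triangulation arbitrarily often, so bounded complexity is not preserved for free. Making this finiteness argument robust, and dually upgrading a direction-fixing one-parameter symmetry to a full Hopf cylinder decomposition, is the technical heart of the proof; the translation-surface case (trivial linear holonomy) is the classical instance and is subsumed by the triangulable discreteness argument.
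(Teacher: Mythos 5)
A preliminary remark: the paper contains no proof of this statement to compare against --- Theorem 4.1 is imported, with attribution, from \cite{DFG} (``The first structure theorem for Veech groups of dilation surfaces has been proved in \cite{DFG}''), so your attempt must be judged on its own merits. Its skeleton is sensible as far as it goes: $V(X)$ is indeed closed (the stabilizer of a point of a Hausdorff orbifold under a continuous action), a closed non-discrete subgroup of $SL(2,\mathbb{R})$ contains a one-parameter subgroup, and when $X$ contains an affine cylinder of angle at least $\pi$, the directions of its \emph{boundary} saddle connections form a nonempty finite $V(X)$-invariant subset of the circle by Proposition 3.1 and Lemma 3.2, which rules out $SO(2)\subseteq V(X)$ in that case. (Do use the boundary directions rather than the ``core directions'': for a cylinder of angle at least $2\pi$ the closed geodesics realize \emph{every} direction, so that set is not finite.)

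The genuine gap is everything concerning surfaces without horizon saddle connections, and you concede it yourself. Lemma 3.2 counts only $k$-horizon saddle connections; on a horizon-free surface it yields no finiteness whatsoever, and such surfaces are not exotic --- the paper notes at the start of Section 3 that translation surfaces have \emph{no} horizon saddle connections at all, since the generic directional foliation is minimal. A translation surface really does carry infinitely many saddle connections with directions dense in $S^{1}$, so no counting of ``bounded-complexity'' saddle connections can produce a contradiction there; its Veech group is discrete for an entirely different, metric-or-rigidity reason (e.g.\ discreteness of holonomy vectors, or the argument of \cite{DFG} that an affine automorphism with derivative near the identity preserves a geometric triangulation up to isotopy and is then forced to be trivial), and your proposal supplies no substitute for that input. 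The same defect recurs when you upgrade the $N$, $A$, $B$ cases to a Hopf cylinder decomposition: the claim that any non-cylinder piece ``would be carried into infinitely many distinct bounded-complexity saddle connections, contradicting Lemma 3.2'' again invokes the lemma outside its scope. Note also that you cannot repair this with a Poincar\'e--Hopf/affine-vector-field argument: non-discreteness of $V(X)$ does not give a continuous family of affine automorphisms, because the derivative morphism from the affine group of $X$ to $SL(2,\mathbb{R})$ can have non-discrete image while the affine group itself is countable --- genus $\geq 2$ Hopf surfaces are precisely such examples. So what your text actually establishes is the easy half (no elliptic one-parameter subgroup when an affine cylinder of angle at least $\pi$ is present, plus the verification that Hopf surfaces realize the upper-triangular group); the content of the dichotomy --- discreteness for triangulable, horizon-free surfaces, which subsumes the translation case rather than being subsumed by it --- is exactly the step left open.
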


The condition on the genus is necessary because there are some affine tori whose Veech group is $SL(2,\mathbb{R})$. They are constructed as the exponential of some flat tori (see Subsection 4.4 of \cite{DFG} for details).\newline
\newline
Parabolic directions in translations surfaces correspond to cylinder decompositions. There is a similar result for dilation surfaces.

\begin{prop}
Let $X$ be a dilation surface such that $V(X)$ contains a parabolic element. Then the decomposition of $X$ into invariant components of the parabolic direction is formed by affine and flat cylinders whose boundary saddle connections belong to the parabolic direction. Moduli of the flat cylinders should be commensurable.
\end{prop}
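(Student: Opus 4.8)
The plan is to realize the parabolic by an affine automorphism and then study how it acts on the directional foliation in its fixed direction. First I would conjugate inside $SL(2,\mathbb{R})$ so that the parabolic fixes the horizontal direction, writing it as $P=\begin{pmatrix}1&t\\0&1\end{pmatrix}$ with $t\neq 0$, and let $\phi\colon X\to X$ be an affine automorphism with $D\phi=P$. Since $P$ fixes every horizontal \emph{vector} (not merely the horizontal direction), $\phi$ sends horizontal leaves to horizontal leaves and is an isometry along them; in particular it permutes the invariant components of the horizontal foliation, and it permutes the finitely many horizontal separatrices and horizontal saddle connections (finite because $X$ has finitely many singularities, each of finite cone angle). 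Replacing $\phi$ by a suitable power $\phi^{N}$, I may assume it fixes each invariant component, each singularity, and each horizontal separatrix and saddle connection. The crucial elementary observation I would record here is that, being an orientation-preserving isometry along horizontal leaves which fixes the emanating singularity, $\phi^{N}$ fixes every such horizontal separatrix and saddle connection \emph{pointwise}.

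The heart of the argument is to show that no invariant component carries an aperiodic (minimal) horizontal foliation. Suppose a component $M$ were minimal. Then a horizontal separatrix $\ell$ entering $M$ is, by minimality, an infinite ray dense in $M$ (it cannot be a saddle connection, since that would be a closed leaf). By the observation above $\phi^{N}$ fixes $\ell$ pointwise, hence $\phi^{N}$ fixes a dense subset of $M$; by continuity $\phi^{N}$ restricts to the identity on $M$. This contradicts $D\phi^{N}=P^{N}\neq \mathrm{Id}$ on the open set $M$. Hence every invariant component has only closed and saddle horizontal leaves, i.e. is a cylinder, and by the flat/hyperbolic dichotomy of Subsection 2.4 it is either a flat or an affine cylinder whose core is horizontal and whose boundary is a union of horizontal saddle connections. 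I expect this minimal-component exclusion to be the main conceptual point: in the translation setting one argues through holonomy vectors, which are unavailable here, so it is precisely the exact preservation of horizontal length by $P$ that must be exploited in their place.

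Finally I would extract the commensurability condition. On a flat cylinder $C_{i}$ of circumference $c_{i}$ and height $h_{i}$, the map $\phi^{N}$ is the affine shear $(x,y)\mapsto (x+Nt\,y,\,y)$, and since it fixes both boundary saddle connections pointwise it must act as an integer power of the Dehn twist, forcing $Nt\,h_{i}=k_{i}c_{i}$ for some $k_{i}\in\mathbb{Z}$. A single $N$ works for all flat cylinders simultaneously, so all ratios $h_{i}/c_{i}$ lie in $\tfrac{1}{Nt}\mathbb{Z}$ and are therefore pairwise commensurable, which is exactly the asserted commensurability of the flat moduli. For affine cylinders no such rigidity appears: because of the transverse dilation holonomy ($\lambda\neq 1$) the twist parameter varies continuously, so a boundary-fixing affine self-map with derivative $P^{N}$ exists for every value of $t$ (this is the affine analogue of the twisting phenomenon noted in Subsection 2.4), and affine cylinders impose no constraint. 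Collecting the components then yields the stated decomposition.
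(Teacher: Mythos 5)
Your overall strategy runs parallel to the paper's (exclude non-cylinder invariant components in the parabolic direction, then read off commensurability of flat moduli from the twist), but the central step rests on a claim that is false for dilation surfaces. You assert that since $P$ fixes every horizontal \emph{vector}, the realizing automorphism $\phi$ is an isometry along horizontal leaves, so that $\phi^{N}$ fixes each horizontal separatrix pointwise. In a dilation surface the differential of an affine automorphism is only well-defined up to a positive scalar: transition maps are $z \mapsto az+b$ with $a \in \mathbb{R}_{+}^{*}$, so ``$D\phi = P$'' only pins down the projective class, and there is no global notion of horizontal length for $\phi$ to preserve (the paper is explicit that Veech group elements are considered up to a scalar factor since lengths are not globally preserved). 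Along a \emph{compact} saddle connection with both endpoints fixed, the scalar is indeed forced to be $1$, but along an \emph{infinite} separatrix $\ell$ with only its initial singularity fixed, $\phi^{N}$ may act as $x \mapsto \lambda x$ with $\lambda \neq 1$ --- and the infinite ray is exactly the case your minimality argument needs. Consequently ``$\phi^{N}$ fixes a dense subset of $M$'' does not follow, and the contradiction collapses. The paper's proof is engineered precisely around this ambiguity: if an infinite critical trajectory $\gamma$ exists in the parabolic direction, it has an accumulation point $x$, and the accumulating nearly-parallel segments of $\gamma$ force any direction-preserving element to act as the \emph{same} scaling on a whole neighborhood of $x$; hence its linear part is a scalar multiple of the identity, i.e.\ trivial in the Veech group, contradicting parabolicity. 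Your argument can be repaired, but the repair is essentially this accumulation-point argument.

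A second, related gap: you invoke the dichotomy ``invariant component is minimal or a cylinder'' to conclude that once minimal components are excluded, every component is a cylinder. That dichotomy is a translation-surface fact; directional foliations of dilation surfaces can exhibit wandering dynamics and leaves accumulating on exotic (Cantor-like) sets, as in the cascades studied in \cite{BFG}, so ``no minimal component $\Rightarrow$ cylinder'' is not automatic. The paper sidesteps the dynamical classification entirely: having shown all critical trajectories in the parabolic direction are saddle connections, it cuts along them; each conical singularity is cut into sectors of angle exactly $\pi$, so discrete Gauss--Bonnet forces every component to be a topological annulus, hence a flat cylinder or an affine cylinder of angle an integer multiple of $\pi$. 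Your final commensurability computation on flat cylinders is essentially sound (within a flat cylinder, compactness forces the scalar ambiguity to be $1$, and the shear condition $Nt\,h_{i} = k_{i}c_{i}$ gives pairwise commensurable moduli), and your remark that affine cylinders impose no constraint matches Subsection 2.4; but as written the two structural steps feeding into it are not justified.
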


\begin{proof}
We first follow a part of the proof of Proposition 4 in \cite{DFG} to prove that in parabolic directions, critical trajectories are saddle connections. By contradiction, we assume there is an infinite critical trajectory $\gamma$ in a parabolic direction. Without loss of generality, we assume $\gamma$ is vertical. Since it is infinite, $\gamma$ has an accumulation point $x$. Any neighborhood $U$ of $x$ is crossed infinitely many times by $\gamma$. Accumulation point $x$ is approached in $U$ by an infinite sequence of vertical lines. Any element of $GL(2,\mathbb{R})$ that acts as a scaling on $\gamma$ also acts as a scaling on $U \cap \gamma$. Consequently, it acts as the same scaling on $U$. Therefore, any element of $GL(2,\mathbb{R})$ that preserves the vertical direction in this dilation surface is the identity up to a scalar factor. It is thus the identity in the Veech group of the dilation structure (elements are considered up to a scalar factor since lengths are not globally preserved). We proved that there cannot be parabolic elements that preserve the direction of an infinite critical trajectory. Every critical trajectory in a parabolic direction is a saddle connection.\newline
Cutting along saddle connections in the parabolic direction provides a decomposition of the dilation surface into invariants components. Since conical singularities are cut out into sectors of angle equal to $\pi$, discrete Gauss-Bonnet implies that the components are topological annuli. These components are thus either flat cylinders or affine cylinders the angle of which are integer multiples of $\pi$. Since the flat cylinders are preserved by the parabolic element, their moduli should be commensurable. 
\end{proof}

Hyperbolic directions in a dilation surface are directions that are preserved by an hyperbolic element of the Veech group of the surface. There is no closed geodesics nor saddle connections in hyperbolic directions of translation surfaces. Such phenomena can appear for some specific dilation surfaces.

\begin{lem}
Let $X$ be a dilation surface such that $V(X)$ contains a hyperbolic element $\phi$. If a saddle connection $\gamma$ lies in a hyperbolic direction, then $\gamma$ belongs to the common boundary of two cylinders (see Figure 3 for examples of cylinders with only a part of the boundary in common).
\end{lem}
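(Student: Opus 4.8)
The plan is to realize the hyperbolic element by an affine automorphism and then to pin down the behaviour of the directional flow next to $\gamma$ by forcing its first-return map to be compatible with the scaling coming from that automorphism. I will first normalize coordinates so that the hyperbolic direction containing $\gamma$ is horizontal and $\phi=\mathrm{diag}(\mu,\mu^{-1})$ with $\mu>1$, so that the horizontal direction is the expanding (unstable) one and the vertical direction is the contracting one. The hypothesis $\phi\in V(X)$ provides an affine automorphism $\Phi$ of $X$ with linear part $\phi$; it preserves the horizontal foliation and sends horizontal saddle connections to horizontal saddle connections. Since $\Phi$ permutes the finite singularity set $\Lambda$, after replacing $\Phi$ by a power I may assume it fixes the endpoint $A$ of $\gamma$. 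Near $A$ the map $\Phi$ is modelled on the linear hyperbolic map, $\gamma$ leaves $A$ along the unstable (horizontal) separatrix, and a short segment $\sigma$ through $A$ in the vertical (stable) direction is mapped into itself by $\Phi$, acting on it as the contraction $s\mapsto\mu^{-1}s$ in the height coordinate $s$ (with the signs of $s$ labelling the two sides of $\gamma$, which $\Phi$ does not swap since $\phi$ has positive diagonal entries).

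Next I would analyze the first-return map $R$ of the horizontal flow to $\sigma$. For $|s|$ small and nonzero the trajectory through the point at height $s$ is regular and shadows the compact saddle connection $\gamma$, so on each one-sided neighborhood of $0$ the combinatorics of the return are constant and $R$ is a single affine branch $R(s)=as+b$, the coefficient $a$ being the linear holonomy accumulated along the return loop. Because $\Phi$ preserves the horizontal foliation and maps $\sigma$ to itself, it conjugates $R$ to itself, giving $R(\mu^{-1}s)=\mu^{-1}R(s)$; substituting the affine form yields $b=\mu^{-1}b$, hence $b=0$ since $\mu\neq 1$, so $R(s)=as$ on each side. A linear return map fixing $0$ is exactly the signature of a cylinder adjacent to $\gamma$: if $a=1$ the nearby horizontal trajectories are all closed and sweep out a flat cylinder with $\gamma$ on its boundary, while if $a\neq 1$ they spiral toward the line of $\gamma$ and fill an affine cylinder again having $\gamma$ on its boundary. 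Running this on the side $s>0$ and on the side $s<0$ produces a cylinder on each side of $\gamma$, so that $\gamma$ lies in the common boundary of two cylinders, which, as in Figure 3, may share only the arc $\gamma$.

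The step I expect to be the main obstacle is justifying that $R$ is genuinely defined on a one-sided neighborhood of $0$, i.e.\ that horizontal trajectories close enough to $\gamma$ do return to $\sigma$ instead of escaping, together with the correct treatment of the cone structure at the singularities. Here I would again exploit rigidity: the property ``returns to $\sigma$'' is $\Phi$-invariant, since the trajectories through heights $s$ and $\mu^{-1}s$ are $\Phi$-images of one another, so it suffices to establish return for a single height and propagate it along the sequence $\mu^{-n}s\to 0$, the shadowing of the compact segment $\gamma$ making return plausible while the passage past the far endpoint must be controlled. A clean alternative that sidesteps the recurrence question is to argue by dichotomy, as for parabolic directions in Proposition 4.2: a trajectory near $\gamma$ is either closed or infinite, and an infinite trajectory accumulating near $\gamma$ would have a return map that is affine and commutes with the contraction $s\mapsto\mu^{-1}s$, hence linear, which forces the cylinder dynamics rather than a minimal or dense regime. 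The reduction to a power of $\Phi$ fixing $A$ and the careful bookkeeping of the two sides at the cone point are the remaining technical points.
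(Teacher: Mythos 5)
Your reduction to the equivariance identity $R(\nu s)=\nu R(s)$ (with your $\nu=\mu^{-1}$) is clean, but the step you yourself flag as the main obstacle is a genuine gap, and neither of your proposed fixes closes it. The claim that on each side of $\gamma$ the return map $R$ to $\sigma$ is defined, with a \emph{single} affine branch, on a one-sided neighborhood of $0$ is essentially equivalent to the conclusion of the lemma: it says that all trajectories sufficiently close to $\gamma$ stay in a fixed corridor around a closed chain of horizontal saddle connections through $\gamma$, i.e.\ that they already lie in a cylinder bordered by $\gamma$. A priori, after passing the far endpoint of $\gamma$ the trajectory at height $s$ may hit another singularity, change corridor, or escape without ever recrossing $\sigma$; and the sets of bad heights (critical heights, non-returning heights) are themselves invariant under $s\mapsto\nu s$, so if nonempty they accumulate at $0$ exactly as fast as your good orbit $\nu^{n}s_{0}$ does. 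Hence the $\Phi$-equivariance cannot bootstrap anything: it propagates return only along the geometric sequence $\nu^{n}s_{0}$, not on an interval, and it supplies no base case. The fallback dichotomy fails as well: a non-returning trajectory simply has no return map, and if the combinatorics vary then $R$ is only piecewise affine on $\sigma$; the relation $R(\nu s)=\nu R(s)$ then makes it a self-similar germ of an affine interval exchange at $0$, which is not a contradiction --- such renormalization-self-similar return maps are precisely the interesting regime on dilation surfaces (compare the cascades of \cite{BFG}), so ``affine and commuting with the contraction, hence linear'' does not follow. This is exactly where the paper's proof does its real work, by a different, geometric mechanism: take a triangle adjacent to $\gamma$ in a geodesic decomposition (if none exists, $\gamma$ already bounds an affine cylinder of angle at least $\pi$), apply $\phi^{n}$ to degenerate it onto the direction of $\gamma$, rule out a side in the contracted direction (it would force a singularity in the interior of a saddle connection), and use the local length ratio to produce a further saddle connection making an angle $\pi$ with $\gamma$; iterating yields a closed chain of saddle connections in the direction of $\gamma$, hence a cylinder on each side. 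Some version of this production of the adjacent chain is the ingredient your dynamical set-up would have to import.

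A secondary, fixable point: in a dilation structure the derivative of an affine automorphism is well defined only up to a positive scalar (the paper itself notes that Veech group elements are considered up to scalar), so from ``$\Phi$ fixes $A$'' you cannot conclude that $\Phi$ acts on $\sigma$ as $s\mapsto\mu^{-1}s$: in a cone chart at $A$ the local model is $\lambda\,\mathrm{diag}(\mu,\mu^{-1})$ with an unknown $\lambda>0$ (itself only defined up to the dilation holonomy of the cone), and if $\lambda\geq\mu$ your transverse contraction disappears. The repair is to observe that a power of $\Phi$ fixes $\gamma$ itself, not merely $A$: it preserves the horizontal separatrix at $A$ containing $\gamma$, and $\gamma$ is the unique saddle connection issuing from $A$ along that separatrix (its interior contains no singularity), so $\Phi(\gamma)=\gamma$; fixing both endpoints and acting affinely, $\Phi$ is the identity on $\gamma$. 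Normalizing a chart so that $\gamma$ has unit length, the local model becomes $\mathrm{diag}(1,\mu^{-2})$, restoring a genuine transverse contraction with $\nu=\mu^{-2}$ rather than $\mu^{-1}$. With that correction your computation $b=\nu b\Rightarrow b=0$ is sound --- but only once the existence and single-branch affinity of $R$ near $0$ has been established, which, as explained above, is the actual content of the lemma.
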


\begin{proof}
Without loss of generality, we can assume $\gamma$ is vertical and that $\phi$ is expanding in the vertical direction and contracting in the horizontal direction. If $\gamma$ is an edge of some triangle $A$ (whose other sides are not in the vertical direction), then $\phi$ provides a sequence of triangles bounding $\gamma$ and such that the directions of the other edges approach the vertical direction. There cannot be a horizontal side $\alpha$ in triangle $A$e because in this case the action of $\phi$ on the triangle would provide another triangle $B$ with $\gamma$ as a vertical side and a portion of $\alpha$ as a horizontal side. This is impossible because there is no singularity in the interior of a saddle connection.\newline
If none of the sides is horizontal, their local length ratio (see Subsection 2.1 for details) is bounded by below by the ratios of their vertical coordinates. Therefore, either there is a conical singularity inside $\gamma$ (which is impossible) or there is another saddle connection that forms an angle of $\pi$ with $\gamma$. We can iterate this procedure to get a chain of saddle connections in the vertical direction. Since there is a finite number of them, these saddle connections are the boundary of a cylinder. If $\gamma$ is not an edge of a triangle, then it is a boundary saddle connection of an affine cylinder of angle at least $\pi$. 
\end{proof}

\begin{prop}
Let $X$ be a dilation surface such that $V(X)$ contains a hyperbolic element $\phi$. If a saddle connection or a closed geodesic belong to a hyperbolic direction, then $X$ is a quasi-Hopf surface.
\end{prop}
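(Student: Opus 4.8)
The plan is to use the hyperbolic element to rigidify the cylinders that meet the two invariant directions, and then to propagate the resulting cylinder structure across the whole surface. Since being quasi-Hopf is invariant under the $SL(2,\mathbb{R})$-action (it only asks for the existence of a pair of directions), I may first conjugate so that the hyperbolic element whose invariant direction carries the given saddle connection or closed geodesic becomes $\begin{pmatrix} \mu & 0 \\ 0 & \mu^{-1} \end{pmatrix}$ with $\mu>1$. Its two fixed (hyperbolic) directions are then the horizontal $\alpha$ and the vertical $\beta$, and the given object lies in one of them, say $\alpha$. From this datum I produce a first cylinder $C$ with $\alpha \in \overline{I_C}$, where $I_C \subset \mathbb{RP}^1$ denotes the closed interval of directions of closed geodesics of $C$: starting from a closed geodesic, $C$ is its maximal cylinder and $\alpha \in I_C$; starting from a saddle connection, Lemma 4.5 shows that it bounds a cylinder $C$, and then $\alpha$ is an endpoint of $I_C$.

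The core of the argument is an invariance principle: a suitable power of $\phi$ fixes $C$ setwise. Granting this, $I_C$ is invariant under the projective action $\phi_*$ on $\mathbb{RP}^1$, whose only fixed points are $\alpha$ and $\beta$, so the endpoints of $I_C$ lie in $\{\alpha,\beta\}$. A flat cylinder is excluded, since its modulus is not preserved by a hyperbolic linear map; hence $C$ is affine and $I_C$ is a nondegenerate interval, so $\partial I_C = \{\alpha,\beta\}$. This says precisely that the boundary saddle connections of $C$ lie in the two directions $\alpha$ and $\beta$. To obtain the invariance itself I would use finiteness: the images $\phi^n(C)$ are maximal cylinders, and when $\alpha$ lies in the interior of $I_C$ their cores are genuine closed geodesics in the fixed direction, pairwise disjoint for distinct $n$; the topological bound on the number of disjoint, essential, pairwise non-isotopic simple closed curves on a fixed compact surface then forces $\phi^N(C)=C$. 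The finiteness of affine cylinders of angle at least $\pi$ (Proposition 3.1 together with Lemma 3.2) yields the same conclusion whenever some $\phi^n(C)$ acquires angle at least $\pi$.

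Finally I would propagate. Every boundary saddle connection of $C$ now lies in a hyperbolic direction, so Lemma 4.5 applies to it: it is the common boundary of $C$ and a second cylinder $C'$, and $C'$ again carries a boundary in direction $\alpha$ or $\beta$, so the invariance principle applies to it verbatim and its boundaries also lie in $\{\alpha,\beta\}$. Iterating, the union $U$ of all cylinders reached this way is a finite union of affine cylinders whose common boundaries are saddle connections in directions $\alpha$ and $\beta$; by construction every such saddle connection has a cylinder of $U$ on both sides, so the topological boundary of $U$ is contained in the finite singular set. Since $X$ is connected and $U$ is nonempty, $U=X$, and $X$ is covered by disjoint affine cylinders whose boundary saddle connections lie in the two directions $\alpha$ and $\beta$; that is, $X$ is quasi-Hopf.

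The hard part will be the invariance principle in the case where the given direction is only a boundary direction of the cylinder, i.e. the saddle-connection case: then $C$ contains no closed geodesic in a fixed direction, and the images $\phi^n(C)$ need not be manifestly disjoint, so the clean counting argument above does not apply directly. Controlling this configuration, so as to force a power of $\phi$ to stabilise $C$, and ruling out a residual non-cylindrical region in the covering step, are where the real work lies; both should be reachable through the finiteness inputs quoted above (the topological bound on disjoint cylinders and the finiteness of angle-at-least-$\pi$ cylinders).
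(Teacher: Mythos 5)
Your overall route --- extract a cylinder from the given saddle connection or closed geodesic, force a power of $\phi$ to stabilise it, deduce that its boundary directions are exactly the two fixed directions $\alpha,\beta$, then propagate through the lemma on saddle connections in hyperbolic directions (Lemma 4.3 of the paper, which you cite as ``Lemma 4.5'') and conclude by connectedness --- is exactly the paper's, and your covering step is argued more carefully than the paper's one-line version; the ``residual non-cylindrical region'' you worry about is already ruled out by your own observation that the boundary of the union $U$ is contained in the finite singular set. The genuine gap is the one you flag yourself: the invariance principle in the saddle-connection case is left open, and your counting argument in the closed-geodesic case is also shaky. The curves $\phi^{n}(\mathrm{core})$, being distinct closed leaves of the direction-$\alpha$ foliation, are indeed pairwise disjoint, but there is no reason for them to be pairwise non-isotopic: disjoint isotopic closed geodesics in a fixed direction do occur (already inside a single affine cylinder of angle at least $2\pi$, and across distinct maximal cylinders in chain decompositions whose cores share a free homotopy class), so the $3g-3+n$ bound on disjoint non-isotopic essential curves does not apply as stated.

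The paper closes both difficulties at once with a finiteness observation that your write-up misses: \emph{in a fixed direction there are only finitely many saddle connections and closed geodesics}. For saddle connections this is immediate, since every saddle connection in direction $\alpha$ is a singular leaf of the direction-$\alpha$ foliation and the conical singularities emit only finitely many separatrices in that direction; for closed geodesics it follows once flat cylinders in direction $\alpha$ are excluded, which you do correctly via the modulus. A power of $\phi$ with positive eigenvalues preserves the direction $\alpha$ and hence permutes this finite set, so a further power fixes the given saddle connection $\gamma$ itself (setwise, with its orientation), and therefore maps each side of $\gamma$ to itself; Lemma 4.3 then produces the two cylinders adjacent to $\gamma$, each preserved by that power. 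This yields your invariance principle in the hard case with no disjointness analysis at all, and the same argument applied to the finitely many closed geodesics in direction $\alpha$ replaces your isotopy-counting step. With this substitution the remainder of your proposal goes through and reproduces the paper's proof.
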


\begin{proof}
If an affine cylinder is preserved by an hyperbolic element of the Veech group, then its boundary saddle connections belong to invariant directions of the element. Besides, there is no flat cylinder whose closed geodesics belong to an hyperbolic direction. In a given hyperbolic direction, there are at most finitely many saddle connections or closed geodesics. Therefore, at least one power of the hyperbolic element preserves the cylinder they belong to. Consequently, we get invariant cylinders whose boundary is connected with other invariant cylinders until all the surface is covered.
\end{proof}

\subsection{Veech groups in presence of horizon saddle connections}

There is a lot of examples of dilation surfaces with many directions of horizon saddle connections. Following Proposition 3.4, some dilation tori feature cylinder decompositions such that every boundary saddle connections is horizon. We can also glue chambers (see Subsection 3.2) on the boundary of a polygon. If there are at least three distinct directions of horizon saddle connections, then the Veech group of the surface is finite.

\begin{proof}[Proof of Theorem 1.3]
The set of directions of horizon saddle connections is finite and globally preserved by elements of the Veech group. Therefore, for any element $\phi$ of the Veech group, there is an integer $n$ such that $\phi^{n}$ preserves every direction of horizon saddle connection. Since they are at least three, then $\phi^{n}$ is the identity. Thus, any element of the Veech group is elliptic. Following Theorem 1 in \cite{DFG}, the Veech group of a dilation surface that is not a Hopf surface is discrete. Therefore, the Veech group of $X$ is conjugated to a finite rotation group.
\end{proof}

In the case of dilation surfaces with exactly two directions of horizon saddle connections, we have to take into account the specific situation of quasi-Hopf surfaces we introduced previously.

\begin{proof}[Proof of Theorem 1.4]
A parabolic element cannot globally preserve two distinct directions so the Veech group is discrete (see Theorem 1 of \cite{DFG}) and contains only elliptic and hyperbolic elements. Elliptic elements globally preserve the two distinct directions so they can only preserve each of them or intertwine them. Therefore, elliptic elements form a finite group conjugated to the trivial group or the finite group of rotations of order two or four.\newline
If the Veech group contains a hyperbolic element, then this element preserves the two directions of horizon saddle connections. Every hyperbolic element preserves the same pair of directions. Since the surface is not Hopf (otherwise there would be only one direction of horizon saddle connections), the Veech group is discrete so the hyperbolic elements belong to the same cyclic group. Following Proposition 4.4, the surface is then quasi-Hopf. Proposition 3.8 provides the condition a quasi-Hopf surfaces should satisfy to admis a hyperbolic element of the Veech group that preserves the two directions of the boundary saddle connections of the cylinders.
\end{proof}

The case of dilation surfaces with only one direction of horizon saddle connections includes that of Hopf surfaces.

\begin{proof}[Proof of Theorem 1.5]
The elliptic elements preserve the direction of horizon saddle connections so they belong to the group $\left\{ \pm Id \right\}$. Every parabolic element preserves this direction. Therefore, either the Veech group is not discrete (the surface is then a Hopf surface and its described by Theorem 4.1) or the parabolic elements all belong to the same cyclic parabolic group. If there is an hyperbolic element in the Veech group of the surface, then it preserves the horizon saddle connections. These saddle connections thus are boundaries of cylinders whose boundary saddle connections belong to one of the two hyperbolic directions. The surface is then quasi-Hopf. If there is only one direction of horizon saddle connections, then the surface is Hopf.
\end{proof}

\textit{Acknowledgements.} The author is supported by a fellowship of Weizmann Institute of Science. This research was supported by the Israel Science Foundation (grant No.  1167/17). The author thanks Selim Ghazouani for interesting discussions. The author also thanks the anonymous referee for their patience and their valuable remarks.\newline

\nopagebreak
\vskip.5cm
\end{document}